\DeclareFontFamily{U}{shuffle}{}
\DeclareFontShape{U}{shuffle}{m}{n}{ <-8>shuffle7 <8->shuffle10}{}
\newcommand{\nc}{\newcommand}
\nc{\AMZV}{\mathsf {AMZV}}
\nc{\ud}{\mathrm{d}}
\nc{\ES}{\mathsf {ES}}
\nc{\MZV}{\mathsf {MZV}}
\nc{\MtV}{\mathsf {MtV}}
\nc{\MTV}{\mathsf {MTV}}
\nc{\MSV}{\mathsf {MSV}}
\nc{\MMV}{\mathsf {MMV}}
\nc{\MMVo}{\mathsf {MMVo}}
\nc{\MMVe}{\mathsf {MMVe}}
\nc{\AMMV}{\mathsf {AMMV}}
\nc{\AMTV}{\mathsf {AMTV}}
\nc{\AMtV}{\mathsf {AMtV}}
\nc{\AMSV}{\mathsf {AMSV}}
\nc{\CMZV}{\mathsf {CMZV}}
\nc{\sha}{\shuffle}
\nc{\cst}{\rotatebox[origin=c]{180}{$\sha$}}
\nc{\cstt}{\rotatebox[origin=c]{180}{$\scriptstyle \sha$}}
\nc{\de}{\delta}
\nc{\DD}{{\mathbb D}}
\nc{\anbb}[1]{\left\langle#1\right\rangle}
\nc{\bibb}[1]{\left\{#1\right\}}
\nc{\mibb}[1]{\left[#1\right]}
\nc{\smbb}[1]{\left(#1\right)}
\nc{\doubb}[1]{\llbracket#1\rrbracket}
\nc{\dm}[1]{\left|#1\right|}
\nc{\Gbinom}[2]{\genfrac{(}{)}{0mm}{0}{#1}{#2}}
\nc{\gbinom}[2]{\genfrac{(}{)}{0mm}{1}{#1}{#2}}
\nc{\Rbinom}[2]{\genfrac{\langle}{\rangle}{0mm}{0}{#1}{#2}}
\nc{\rbinom}[2]{\genfrac{\langle}{\rangle}{0mm}{1}{#1}{#2}}
\nc{\Qbinom}[2]{\genfrac{[}{]}{0mm}{0}{#1}{#2}_q}
\nc{\qbinom}[2]{\genfrac{[}{]}{0mm}{1}{#1}{#2}_q}
\nc{\binq}[2]{\genfrac{[}{]}{0mm}{0}{#1}{#2}}
\nc{\tbnq}[2]{\genfrac{[}{]}{0mm}{1}{#1}{#2}}
\nc{\cinq}[2]{\genfrac{\{}{\}}{0mm}{0}{#1}{#2}}
\nc{\tcnq}[2]{\genfrac{\{}{\}}{0mm}{1}{#1}{#2}}
\nc{\mfrac}[2]{\genfrac{}{}{0pt}{}{#1}{#2}}
\nc{\tf}{\tfrac}
\nc{\db}{{\mathbb D}}
\nc{\pari}{{\rm par}}
\nc{\dk}{{\mathbb K}}
\nc{\ola}{\overleftarrow}
\nc{\ora}{\overrightarrow}
\nc{\lra}{\longrightarrow}
\nc{\Lra}{\Longrightarrow}
\nc\Res{{\rm Res}}
\nc\setX{{\mathsf{X}}}
\nc\fA{{\mathfrak{A}}}
\nc\evaM{{\texttt{M}}}
\nc\evaML{{\text{\em{\texttt{M}}}}}
\nc\z{{\texttt{z}}}
\nc\emz{\emph{\texttt{z}}}
\nc\tx{{\texttt{x}}}
\nc\txp{{\tx_1}} 
\nc\txn{{\tx_{-1}}} 
\nc\neo{{1}}
\nc{\yi}{{1}}
\nc\one{{-1}}
\nc\gD{{\Delta}}
\nc\eps{{\varepsilon}}
\nc{\bfMB}{{\bf MB}}
\nc{\bftB}{{\bf tB}}
\nc{\bfTB}{{\bf TB}}
\nc{\bfSB}{{\bf SB}}
\nc{\bfB}{{\bf B}}
\nc{\bfp}{{\bf p}}
\nc{\bfq}{{\bf q}}
\nc{\bfr}{{\bf r}}
\nc{\bfu}{{\bf u}}
\nc{\bfv}{{\bf v}}
\nc{\bfw}{{\bf w}}
\nc{\bfy}{{\bf y}}
\nc{\T}{\ddot{t}}
\nc{\bfe}{{\boldsymbol{\sl{e}}}}
\nc{\bfi}{{\boldsymbol{\sl{i}}}}
\nc{\bfj}{{\boldsymbol{\sl{j}}}}
\nc{\bfk}{{\boldsymbol{\sl{k}}}}
\nc{\bfl}{{\boldsymbol{\sl{l}}}}
\nc{\bfm}{{\boldsymbol{\sl{m}}}}
\nc{\bfn}{{\boldsymbol{\sl{n}}}}
\nc{\bfs}{{\boldsymbol{\sl{s}}}}
\nc{\bft}{{\boldsymbol{\sl{t}}}}
\nc{\bfx}{{\boldsymbol{\sl{x}}}}
\nc{\bfz}{{\boldsymbol{\sl{z}}}}
\nc\bfgs{{\boldsymbol \gs}}
\nc\bfgl{{\boldsymbol \lambda}}
\nc\bfsi{{\boldsymbol \gs}}
\nc\bfet{{\boldsymbol \eta}}
\nc\bfeta{{\boldsymbol \eta}}
\nc\bfeps{{\boldsymbol \eps}}
\nc\mmu{{\boldsymbol \mu}}
\nc\bfone{{\bf 1}}
\nc{\myone}{{1}}
 \nc{\calA}{{\mathcal A}}
 \nc{\calB}{{\mathcal B}}
 \nc{\calC}{{\mathcal C}}
 \nc{\calD}{{\mathcal D}}
 \nc{\calE}{{\mathcal E}}
 \nc{\calF}{{\mathcal F}}
 \nc{\calG}{{\mathcal G}}
 \nc{\calH}{{\mathcal H}}
 \nc{\calI}{{\mathcal I}}
 \nc{\calJ}{{\mathcal J}}
 \nc{\calK}{{\mathcal K}}
 \nc{\calL}{{\mathcal L}}
 \nc{\calM}{{\mathcal M}}
 \nc{\calN}{{\mathcal N}}
 \nc{\calO}{{\mathcal O}}
 \nc{\calP}{{\mathcal P}}
 \nc{\calQ}{{\mathcal Q}}
 \nc{\calR}{{\mathcal R}}
 \nc{\calS}{{\mathcal S}}
 \nc{\calT}{{\mathcal T}}
 \nc{\calU}{{\mathcal U}}
 \nc{\calV}{{\mathcal V}}
 \nc{\calW}{{\mathcal W}}
 \nc{\calX}{{\mathcal X}}
 \nc{\calY}{{\mathcal Y}}
 \nc{\calZ}{{\mathcal Z}}
  \nc{\cala}{{\mathcal a}}
 \nc{\calb}{{\mathcal b}}
 \nc{\calc}{{\mathcal c}}
 \nc{\cald}{{\mathcal d}}
 \nc{\cale}{{\mathcal e}}
 \nc{\calf}{{\mathcal f}}
 \nc{\calg}{{\mathcal g}}
 \nc{\calh}{{\mathcal h}}
 \nc{\cali}{{\mathcal i}}
 \nc{\calj}{{\mathcal j}}
 \nc{\calk}{{\mathcal k}}
 \nc{\call}{{\mathcal l}}
 \nc{\calm}{{\mathcal m}}
 \nc{\caln}{{\mathcal n}}
 \nc{\calo}{{\mathcal o}}
 \nc{\calp}{{\mathsf p}}
 \nc{\calq}{{\mathcal q}}
 \nc{\calr}{{\mathcal r}}
 \nc{\cals}{{\mathcal s}}
 \nc{\calt}{{\mathcal t}}
 \nc{\calu}{{\mathcal u}}
 \nc{\calv}{{\mathcal v}}
 \nc{\calw}{{\mathcal w}}
 \nc{\calx}{{\mathcal x}}
 \nc{\caly}{{\mathcal y}}
 \nc{\calz}{{\mathcal z}}
 \nc{\ot}{{\otimes}}
\def\int{\displaystyle\!int}
\def\lim{\displaystyle\!lim}
\def\sum{\displaystyle\!sum}
\def\sup{\displaystyle\!sup}
\def\inf{\displaystyle\!inf}
\def\cap{\displaystyle\!cap}
\def\max{\displaystyle\!max}
\def\min{\displaystyle\!min}
\def\frac{\displaystyle\!frac}
\nc{\gam}{{\gamma}}
\nc{\gG}{{\Gamma}}
\nc{\om}{{\omega}}
\nc{\vep}{{\varepsilon}}
\nc{\ga}{{\alpha}}
\nc{\gl}{{\lambda}}
\nc{\gb}{{\beta}}
\nc{\gd}{{\delta}}
\nc{\gf}{{\varphi}}
\nc{\gs}{{\sigma}}
\nc{\gk}{{\kappa}}
\nc{\gS}{\Sigma}
\let\oldsection\section
\renewcommand\section{\setcounter{equation}{0}\oldsection}
\DeclareMathOperator{\Li}{Li}
\nc\UU{\mbox{\bfseries U}}
\nc\FF{\mbox{\bfseries \itshape F}}
\nc\h{\mbox{\bfseries \itshape h}}\nc\dd{\mbox{d}}
\nc\g{\mbox{\bfseries \itshape g}}
\nc\xx{\mbox{\bfseries \itshape x}}
\def\N{\mathbb{N}}
\def\Z{\mathbb{Z}}
\def\ze{\zeta}
\def\xx{\left(\frac{1-x}{1+x} \right)}
\nc\divg{{\text{div}}}
\theoremstyle{plain}
\newtheorem{thm}{Theorem}[section]
\newtheorem{lem}[thm]{Lemma}
\newtheorem{cor}[thm]{Corollary}
\newtheorem{pro}[thm]{Proposition}
\theoremstyle{definition}
\newtheorem{re}[thm]{Remark}
\newtheorem{qu}{Question}[section]
\nc{\cicc}[1]{{}_{{}^{ \bigcirc\hskip-1.2ex{#1}\hskip.3ex{}}}}
\nc{\cic}[1]{{}^{\bigcirc\hskip-1.15ex{\raisebox{-0.015cm}{\text{$\scriptscriptstyle #1$}}}\hskip.25ex{}}}
\nc{\ccic}[1]{{}^{\bigcirc\hskip-1.5ex{\raisebox{-0.015cm}{\text{$\scriptscriptstyle #1$}}}\hskip.25ex{}}}
\nc{\ncic}[1]{ {\bigcirc\hskip-1.6ex{\raisebox{-0.0cm}{\text{$\scriptstyle #1$}}}\hskip.25ex{}}}
\nc{\nncic}[1]{ {\bigcirc\hskip-2ex{\raisebox{-0.0cm}{\text{$\scriptstyle #1$}}}\hskip.25ex{}}}
\nc{\cci}[1]{{}_{{}^{ {\textstyle \bigcirc}\hskip-2.05ex{#1}\hskip-.35ex{}}}}
\nc{\ccicc}[1]{{}_{{}^{ {\textstyle \bigcirc}\hskip-1.55ex{#1}\hskip-0.1ex{}}}}
\nc{\x}{\rm{x}}
\nc{\tworow}[2]{\left(#1 \atop #2\right)}
\nc{\fl}{{\mathfrak l}}
\nc{\fm}{{\mathfrak m}}
\begin{document}
\title{\bf Contour Integration and Cyclotomic Ap\'ery-Like Series Involving Generalized Binomial Coefficients}
\author{
{Ce Xu\thanks{Email: cexu2020@ahnu.edu.cn}}\\[1mm]
\small School of Mathematics and Statistics, Anhui Normal University,\\ \small Wuhu 241002, P.R. China
}
\date{}
\maketitle

\noindent{\bf Abstract.} In this paper, we present a method based on contour integration to investigate a class of cyclotomic parametric Ap\'ery-like series. The general term of such series involves a parametric central binomial coefficient, which is defined via the Gamma function. Using this approach, we express a family of cyclotomic Ap\'ery-like series in terms of multiple polylogarithms, cyclotomic Hurwitz zeta values, Riemann zeta values and $\log(2)$. In particular, we provide several illustrative examples and corollaries, which enable us to recover a number of known results on Ap\'ery-like series. At the same time, we have also left open two questions regarding Ap\'ery-like series. Moreover, by considering integrals of the generating function for Fuss-Catalan numbers, we derive an alternative expression for a classical Ap\'ery-like series. Combining this with known results allows us to establish several identities for multiple polylogarithm functions.

\medskip

\noindent{\bf Keywords}: Cyclotomic parametric Ap\'ery-like series; Contour integration; (Parametric) Central binomial coefficients; Multiple polylogarithm function; Cyclotomic Hurwitz zeta function; Fuss-Catalan numbers.
\medskip

\noindent{\bf AMS Subject Classifications (2020):} 11M32, 11M99.

\section{Introduction}

For $\Re(s) > 1$, the classical \emph{Riemann zeta function} $\zeta(s)$ is defined by the absolutely convergent infinite series:
\begin{align}
\zeta(s):=\sum_{n=1}^\infty \frac1{n^s}.
\end{align}
Since its introduction, the study of the Riemann zeta function has remained at the forefront of mathematical research. In particular, investigating the irrationality and transcendence of its values at odd integers greater than 1 constitutes a profoundly important class of problems in this field. The values of the Riemann zeta function at integers greater than 1 are known as Riemann zeta values. If $s>1$ is an even integer, the value $\zeta(s)$ is referred to as an \emph{even Riemann zeta value}; if $s>1$ is odd, it is called an \emph{odd Riemann zeta value}, respectively. The irrationality and transcendence of even zeta values $\zeta(2k)$ were resolved long ago, as they can be explicitly expressed by Euler's celebrated formula \[\zeta(2k)=\frac{(-1)^{k-1}B_{2k}(2\pi)^{2k}}{2(2k)!}\quad (k\in\N)\]
combined with the transcendence of $\pi$, where $B_k$ denotes the \emph{Bernoulli numbers} defined by
\[\frac{x}{e^x-1}=\sum_{n=0}^{\infty}{\frac{B_n}{n!}x^n}.\]
In contrast, the study of irrationality and transcendence for odd zeta values saw no significant progress for a long time. A major breakthrough came in 1979, when Roger Ap\'ery \cite{Apery1978} proved the irrationality of $\zeta(3)$ by studying the following infinite series involving the central binomial coefficient $\binom{2n}{n}$:
\begin{equation*}
\zeta(3)=\frac52 \sum_{n=1}^\infty \frac{(-1)^{n+1}}{n^3\binom{2n}{n}}.
\end{equation*}
This marked the first important result on the irrationality of odd Riemann zeta values. Due to Ap\'ery's groundbreaking contributions to the irrationality of odd Riemann zeta values, many subsequent researchers have referred to series involving central binomial coefficients as ``Ap\'ery-like series," which has since become a prominent research direction in number theory. It is particularly noteworthy that Professor Zhi-Wei Sun has enthusiastically proposed numerous original conjectures. In his paper \cite{Sun2015} and monograph \cite{Sun2021}, he put forward many conjectural identities concerning Ap\'ery-like series, some of which have already been proven. Currently, the monograph \cite{Sun2021} is available only in Chinese; however, it is understood that Professor Sun is actively working on publishing an English edition. Surprisingly, recent research in this area has revealed profound connections between Ap\'ery-like series and theories of hypergeometric functions, elliptic functions, and multiple zeta values (MZVs for short). Methods derived from these related theories have proven effective in resolving numerous identity problems for Ap\'ery-like series. For some recent work in this direction, we refer the reader to \cite{Au2024,Au2025,Campbell2019,CampbellCA2022,CantariniD2019,CampbellGZ2024,CampbellDS2019,ChenWangZhong2025,ChenWang2025,GR2025,LaiLuorr2022,Lupu2022,WX2021,WLX2022} and the references therein. For a systematic introduction to the theory of multiple zeta values, we recommend Zhao's monograph \cite{Z2016} (the first book dedicated to MZVs theory, which contains nearly all major results prior to 2016) and the recent review article \cite{Z2024}.

Research has shown that among the various methods for investigating the evaluation of infinite series, contour integration and residue computation stand out as highly effective and practical techniques. This approach has led to numerous interesting and celebrated results. For example, Flajolet and Salvy \cite{Flajolet-Salvy} employed the method of contour integration to establish a parity theorem for generalized Euler sums. Recently, the author of this paper, jointly with Rui \cite{Rui-Xu2025}, also applied a similar approach to establish a parity theorem for generalized cyclotomic Euler sums.  Studies presented in \cite{WX2021} and \cite{WLX2022} demonstrate that the contour integration method can also be effectively applied to the investigation of multiple Ap\'ery-like series, leading to a number of interesting conclusions.

The main aim of this paper is to explore the following cyclotomic Ap\'ery-like series, which incorporate generalized binomial coefficients and are termed \emph{cyclotomic parametric Ap\'ery-like series}, using the method of contour integration:
\begin{align}\label{defn-CPAS}
\sum_{n=-\infty}^\infty \frac{4^{n+a}}{(n+a)^q\binom{2n+2a}{n+a}}z^n,
\end{align}
where $a\in \mathbb{C}\setminus \Z,\ q\in\N$ and $z$ is an arbitrary root of unity with $(q,z)\neq(1,1)$. Here the \emph{generalized binomial coefficient} $\binom{a}{b}$ is denoted by
\[\binom{a}{b}:=\frac{\Gamma(a+1)}{\Gamma(b+1)\Gamma(a-b+1)}
    \quad\text{for}\ a,b,a-b\notin \N^-:=\{-1,-2,-3,\ldots\},\]
where $\Gamma(s)$ denotes the \emph{gamma function}, defined for $\Re(s) > 0$ by the improper integral
\begin{align*}
\Gamma(s):=\int_0^\infty e^{-t}t^{s-1}dt.
\end{align*} In particular, when $a$ is a half-integer in \eqref{defn-CPAS} and $2n+2a$  is a non-positive integer, the reciprocal of the parametric central binomial coefficient, $\binom{2n+2a}{n+a}^{-1}$, is interpreted as zero by convention.

\section{Main Theorems and Corollaries}
We will prove that the cyclotomic parametric Ap\'ery-like series \eqref{defn-CPAS} can be expressed as a combination of $\log(2)$, Riemann zeta values, cyclotomic Hurwitz zeta values, and multiple polylogarithms. More precisely, we have the following theorem:
\begin{thm}\label{thm-CPAS} Let $x$ be a root of unity. For $a\in \mathbb{C}\setminus \Z$ and $q\in \N$ with $(q,x)\neq (1,1)$, we have
\begin{align}\label{equ-thm-CPAS}
&\sum_{n=-\infty}^\infty \frac{4^{n+a}}{(n+a)^q\binom{2n+2a}{n+a}}x^{-n}-(-1)^q\left(\Li_1(x;1-a)-x\Li_1(x^{-1};a)\right) \sum_{n=1}^\infty \frac{\binom{2n}{n}}{n^{q-1}4^n}x^n\nonumber\\
&=\sum_{k_1+\cdots+k_5=q-1,\atop \forall k_j\geq 0} \frac{C_{k_1}C_{k_2}D_{k_3}2^{k_3+k_4}}{k_1!k_2!k_3!k_4!}\log^{k_4}(2)\left(x\Li_{k_5+1}(x^{-1};a)-(-1)^{k_5}\Li_{k_5+1}(x;1-a)\right),
\end{align}
where $\Li_{p}(x;b+1)\ ((p,x)\neq (1,1))$ denotes the \emph{cyclotomic Hurwitz zeta function}, defined as follows:
\begin{align}
\Li_{p}(x;b+1):= \sum_{n=1}^\infty \frac{x^n}{(n+b)^p}\quad(p\in\N,\ b\in \mathbb{C}\setminus \N^-)\quad\text{and}\quad \Li_p(x)\equiv \Li_{p}(x;1).
\end{align}
Here
\begin{align}
&C_n:=Y_n\Big(0,1!\zeta(2),-2!\zeta(3),\ldots,(-1)^n(n-1)!\zeta(n)\Big),\label{defn-C-squ}\\
&D_n:=Y_n\Big(0,-1!\zeta(2),2!\zeta(3),\ldots,(-1)^{n-1}(n-1)!\zeta(n)\Big),\label{defn-D-squ}
\end{align}
where $Y_n(x_1,\ldots,x_n)$ stands for the \emph{exponential complete Bell polynomials} (see Section \ref{sectwo}).
\end{thm}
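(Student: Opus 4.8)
The plan is to convert the bilateral series into a sum of residues of a single meromorphic kernel and then balance those residues against one another. First I would record the Gamma-function simplification coming from Legendre's duplication formula,
\[\frac{4^{s}}{\binom{2s}{s}}=\frac{\sqrt\pi\,\Gamma(s+1)}{\Gamma(s+1/2)},\]
so that, writing $s=n+a$, the summand in \eqref{equ-thm-CPAS} is $\sqrt\pi\,\Gamma(n+a+1)\big/\big((n+a)^{q}\Gamma(n+a+1/2)\big)\,x^{-n}$. I would then introduce the Lerch-type ``cyclotomic cotangent'' kernel $\lambda(s)=\sum_{n\in\Z}x^{-n}/(s-a-n)$, whose only singularities are simple poles at $s=a+n$ with residue $x^{-n}$, and form
\[G(s)=\frac{\sqrt\pi\,\Gamma(s+1)}{s^{q}\,\Gamma(s+1/2)}\,\lambda(s)=\frac{4^{s}}{s^{q}\binom{2s}{s}}\,\lambda(s).\]
By construction $\Res_{s=a+n}G$ is exactly the $n$-th term of the series on the left of \eqref{equ-thm-CPAS}, so $\sum_{n}\Res_{s=a+n}G$ recovers the bilateral sum.

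Next I would apply the residue theorem on a family of expanding contours (circles or squares of radius $R_N\to\infty$ chosen to avoid all poles) and argue that the contour integral tends to $0$, so that the sum of all residues of $G$ vanishes. The algebraic decay is supplied by $\Gamma(s+1)/\Gamma(s+1/2)=O(|s|^{1/2})$ together with the factor $s^{-q}$. The delicate point, and what I expect to be the main obstacle, is controlling $\lambda(s)$: for a root of unity $x\neq1$ it grows exponentially in one imaginary half-plane, and matching this growth against the algebraic decay of the $\Gamma$-quotient is precisely where the hypothesis $(q,x)\neq(1,1)$ enters and where the contours must be placed with care. This is the analytic heart of the argument; the rest is essentially bookkeeping.

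Granting that the residues sum to zero, the poles of $G$ split into three families. Those at $s=a+n$ reproduce the bilateral series $S$. At $s=0$ there is a pole of order $q$, and I would compute $\Res_{s=0}G$ as the coefficient of $s^{q-1}$ in $\frac{4^{s}}{\binom{2s}{s}}\lambda(s)$. Here I would use the exponential generating functions $\sum_{k}C_k s^k/k!=\Gamma(1+s)e^{\gamma s}$ and $\sum_k D_k s^k/k!=e^{-\gamma s}/\Gamma(1+s)$, which follow from $\log\Gamma(1+t)=-\gamma t+\sum_{k\ge2}(-1)^k\zeta(k)t^k/k$, to factor
\[\frac{4^{s}}{\binom{2s}{s}}=\frac{\Gamma(1+s)^{2}\,4^{s}}{\Gamma(1+2s)}=\Big(\sum_{k\ge0}\tfrac{C_k}{k!}s^{k}\Big)^{2}\Big(\sum_{k\ge0}\tfrac{D_k2^{k}}{k!}s^{k}\Big)\Big(\sum_{k\ge0}\tfrac{2^{k}\log^{k}2}{k!}s^{k}\Big),\]
the $e^{\pm2\gamma s}$ factors cancelling. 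Expanding the kernel about $s=0$ and splitting $\sum_{n\in\Z}x^{-n}(n+a)^{-p-1}$ into its two one-sided pieces, which are identified with the cyclotomic Hurwitz values, yields $\lambda(s)=-\sum_{p\ge0}s^{p}\big(x\,\Li_{p+1}(x^{-1};a)-(-1)^{p}\Li_{p+1}(x;1-a)\big)$. Multiplying the two expansions and reading off $[s^{q-1}]$ produces exactly $-1$ times the right-hand side of \eqref{equ-thm-CPAS}.

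Finally I would treat the simple poles at $s=-m$ ($m\ge1$) coming from $\Gamma(s+1)$. Using $\Res_{s=-m}\Gamma(s+1)=(-1)^{m-1}/(m-1)!$ and $\Gamma(1/2-m)^{-1}=(2m)!\big/\big((-4)^{m}m!\sqrt\pi\big)$, together with $\lambda(-m)=x^{m}\big(\Li_1(x;1-a)-x\,\Li_1(x^{-1};a)\big)$, the factor $\tfrac{(2m)!}{(m-1)!\,m!}=m\binom{2m}{m}$ collapses and gives
\[\sum_{m\ge1}\Res_{s=-m}G=-(-1)^{q}\big(\Li_1(x;1-a)-x\,\Li_1(x^{-1};a)\big)\sum_{m\ge1}\frac{\binom{2m}{m}}{m^{q-1}4^{m}}x^{m}.\]
Adding the three contributions, setting the total to zero, and rearranging then reproduces \eqref{equ-thm-CPAS} verbatim. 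Apart from the contour estimate flagged above, the remaining work is the routine matching of the two Laurent/Taylor expansions and a careful sign count, both of which are straightforward once $\lambda$ and the generating-function factorization are in hand.
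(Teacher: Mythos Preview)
Your approach is essentially identical to the paper's: your kernel $\lambda(s)$ is precisely the function $\Phi(s-a;x)$ introduced there (compare your bilateral sum with the definition and the Laurent/Taylor expansions in Lemmas~\ref{lem-rui-xu-two} and~\ref{lem-extend-xuzhou-two}), your factorization of $4^{s}/\binom{2s}{s}$ as $\Gamma(1+s)^{2}4^{s}/\Gamma(1+2s)$ and the ensuing use of the $C_k,D_k$ generating functions match the paper's Lemma~\ref{Lem.CD}, and the three families of residues and their evaluations coincide line by line with the proof in Section~3.

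One correction worth noting: your stated concern that $\lambda(s)$ ``grows exponentially in one imaginary half-plane'' for $x\neq 1$ is backwards. Writing $x=e^{i\theta}$ with $\theta\in(0,2\pi)$, one has the closed form $\lambda(s)=\pi e^{i(s-a)(\pi-\theta)}/\sin(\pi(s-a))$ (this is essentially Theorem~\ref{thm-main-two}), and this \emph{decays} exponentially as $|\Im s|\to\infty$; so for $x\neq 1$ the contour estimate is in fact easier than you feared and works already for $q=1$. The borderline case is $x=1$, where $\lambda$ reduces to $\pi\cot(\pi(s-a))$, which is merely bounded; then the $O(|s|^{1/2-q})$ from the Gamma ratio forces $q\ge 2$, and this (together with divergence of $\sum_{n}\binom{2n}{n}4^{-n}$) is exactly where the exclusion $(q,x)\neq(1,1)$ enters.
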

The second cyclotomic Ap\'ery-like series in equation \eqref{equ-thm-CPAS} above can be expressed in terms of multiple polylogarithms and $\log(2)$, as derived from the following proposition (see \cite[Theorem 3.8]{WLX2022}).
\begin{pro}\label{thm-CAS} For $p\in \N$ and $|x|\leq 1$, we have
\begin{align}\label{equ-thm-CAS}
\sum_{n=1}^\infty \frac{\binom{2n}{n}}{n^{p+1}4^n}x^n&=-2\sum_{k=0}^{p-1}\sum_{j+l\leq k,\atop j,l\geq 0} (-1)^{j+l+k}\frac{(j+1)\log^{p-1-k}\Big(\frac{|x|}{4}\Big)}{(p-1-k)!(k-j-l)!}\log^{k-j-l}\left(\frac{|1-\sqrt{1-x}|}{2}\right)\nonumber\\
&\qquad\qquad\times\left( \Li_{l+1,\{1\}_{j+1}}\left(\frac{1-\sqrt{1-x}}{2}\right)-\Li_{l+2,\{1\}_{j}}\left(\frac{1-\sqrt{1-x}}{2}\right)\right),
\end{align}
where $\{1\}_r$ denotes the sequence of $1$ repeated $r$ times. In particular, the reference \cite{Chen2016} provides expressions for the two cases $p=-1$ and $0$:
\begin{align}
&\sum_{n=1}^\infty \frac{\binom{2n}{n}}{4^n}x^n=\frac1{\sqrt{1-x}}-1\quad (|x|\leq 1\quad\text{and}\quad x\neq 1),\label{equ-thm-CAS-zero}\\
&\sum_{n=1}^\infty \frac{\binom{2n}{n}}{n^{}4^n}x^n=2\log\left(\frac{2}{1+\sqrt{1-x}}\right)\quad (|x|\leq 1).\label{equ-thm-CAS-one}
\end{align}
\end{pro}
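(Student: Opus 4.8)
The plan is to reduce the statement to an iterated–integral computation via the substitution that rationalises $\sqrt{1-x}$, bootstrapping from the two base evaluations \eqref{equ-thm-CAS-zero} and \eqref{equ-thm-CAS-one}. Write $g_p(x):=\sum_{n\ge 1}\binom{2n}{n}x^n/(n^{p+1}4^n)$. Differentiating term by term gives $g_p'(x)=g_{p-1}(x)/x$, so $g_p(x)=\int_0^x g_{p-1}(t)\,\ud t/t$, with base case $g_0(x)=2\log\big(2/(1+\sqrt{1-x})\big)$ supplied by \eqref{equ-thm-CAS-one}. Thus the whole family is obtained from $g_0$ by repeatedly applying the operator $f\mapsto\int_0^x f(t)\,\ud t/t$, which is precisely what raises the exponent of $n$ in the denominator.

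Next I would linearise. Put $u=\tfrac{1-\sqrt{1-x}}{2}$, equivalently $x=4u(1-u)$, and write $U=\tfrac{1-\sqrt{1-x}}{2}$ for the value of $u$ at the upper limit. Then $x/4=u(1-u)$ and $1+\sqrt{1-x}=2(1-u)$, so the base case becomes $g_0=-2\log(1-u)=2\Li_1(u)$, while $\ud x/x=\ud\log(x/4)=\ud\log(u(1-u))=\ud u/u-\ud u/(1-u)$. Writing $\omega_0=\ud u/u$, $\omega_1=\ud u/(1-u)$ and $\theta=\omega_0-\omega_1$, an induction on $p$ identifies $g_p$ with the iterated integral $g_p=2\int_0^U \theta^p\omega_1$ (forms read from the outer to the inner integration), since prepending the outer form $\ud x/x=\theta$ is exactly the integral operator above.

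The heart of the argument is to evaluate $\int_0^U\theta^p\omega_1$ in closed form. The key observation is that $\theta=\ud\phi$ is exact, with $\phi(u)=\log(u(1-u))$ and $\phi(U)=\log(x/4)$; collapsing the ordered $p$-simplex in the $\phi$-coordinate gives $\int_0^U\theta^p\omega_1=\tfrac1{p!}\int_0^U(\phi(U)-\phi(w))^p\,\omega_1(w)$, the lower-limit singularity of $\phi$ being harmless because the innermost $\omega_1$ makes the integral converge. I would then expand $(\phi(U)-\phi(w))^p=(\log(x/4)-\log w-\log(1-w))^p$ multinomially, pull the powers of $\log(x/4)$ outside the integral, and split $-\log w=(\log U-\log w)-\log U$ to extract the powers of $\log U=\log\tfrac{1-\sqrt{1-x}}{2}$. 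The residual integrals $\int_0^U(\log U-\log w)^i(-\log(1-w))^\gamma\,\omega_1(w)$ are evaluated by the same simplex principle applied to the exact form $\omega_0=\ud\log w$, together with $\int_0^U\omega_0^i\omega_1^{\gamma+1}=\Li_{i+1,\{1\}_\gamma}(U)$, giving $i!\,\gamma!\,\Li_{i+1,\{1\}_\gamma}(U)$. Assembling the pieces expresses $g_p$ as an explicit sum over $i,\gamma\ge 0$ of $\Li_{i+1,\{1\}_\gamma}(U)$ weighted by monomials in $\log(x/4)$ and $\log U$ with the factorials appearing in \eqref{equ-thm-CAS}.

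The final and most delicate step is to match this closed form to the exact normalisation on the right of \eqref{equ-thm-CAS}, whose building blocks are the differences $\Li_{l+1,\{1\}_{j+1}}(U)-\Li_{l+2,\{1\}_j}(U)$ carrying the weight $(j+1)$. At the level of iterated integrals this difference equals $-\int_0^U\omega_0^l\theta\,\omega_1^{j+1}$, which is the natural regrouping that turns the raw word expansion of $\theta^p\omega_1$ into the stated blocks. I expect the main obstacle to be the bookkeeping here: one must absorb the depth-one contributions (the pure strings $\Li_{\{1\}_r}(U)=\Li_1(U)^r/r!$ coming from $i=0$) via $\Li_1(U)=-\log(1-U)=\log U-\log(x/4)$, and then verify that the remaining coefficients collapse to precisely $(-1)^{j+l+k}(j+1)/\big((p-1-k)!\,(k-j-l)!\big)$ over the stated ranges. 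This I would carry out by comparing, on both sides, the coefficient of each fixed $\Li_{a+1,\{1\}_b}(U)$ as a polynomial in $\log(x/4)$ and $\log U$, or by a direct induction on $p$ reproducing the block structure; the cases $p=1,2$ already provide a reassuring check. Throughout one works on the principal branch for real $x\in(0,1]$, where $\log(x/4)$ and $\log U$ are real, the moduli in \eqref{equ-thm-CAS} accounting for the analytic continuation.
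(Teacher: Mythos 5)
Your route is, in substance, the paper's own: the paper imports this proposition from \cite[Thm.\ 3.8]{WLX2022} (specialised to $m=2$, $x\mapsto x/4$, so that $1-G_2(x/4)^{-1}=(1-\sqrt{1-x})/2$), and it carries out exactly this method in full for the companion identity, Theorem \ref{thm-AFSN3} in Section \ref{secfour}: bootstrap from the $p=0$ case by iterating $f\mapsto\int_0^x f(t)\,dt/t$, rationalise with $u=(1-\sqrt{1-x})/2$ so that $dx/x=du/u-du/(1-u)$ and the base case becomes $-2\log(1-u)$, then expand powers of logarithms and recognise multiple polylogarithms. The steps you actually execute are correct: the identification $g_p=2\int_0^U\theta^p\omega_1$, the simplex collapse for the exact form $\theta=d\log(u(1-u))$, and the evaluation $\int_0^U(\log U-\log w)^i(-\log(1-w))^\gamma\,\omega_1=i!\,\gamma!\,\Li_{i+1,\{1\}_\gamma}(U)$ all check out.

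The genuine gap is the final step, which you defer to ``bookkeeping'' but which is where the content of the formula lives. Your fully collapsed multinomial expansion delivers $g_p$ as a combination of the individual functions $\Li_{i+1,\{1\}_\gamma}(U)$, whereas the right-hand side of \eqref{equ-thm-CAS} is built only from the differences $\Li_{l+1,\{1\}_{j+1}}(U)-\Li_{l+2,\{1\}_j}(U)$; in each weight these differences span only a hyperplane inside the span of the individual terms, and the depth-one strings $\Li_{1,\{1\}_\gamma}(U)=\Li_1(U)^{\gamma+1}/(\gamma+1)!$ must additionally be reabsorbed into the log-monomials via $\Li_1(U)=\log U-\log(x/4)$. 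So the regrouping into blocks with weight $(j+1)$ is a nontrivial family of identities among your coefficients, not a formality, and it is not verified. The paper's computation (see the chain of equalities \eqref{proof-Apery-two}) avoids this entirely by a different order of operations: it keeps the innermost factor $\log(1-w)$ attached, expands only $\log^k|w/(1-w)^m|$ binomially, and then expands $\log^{j+1}(1-w)=(-1)^{j+1}(j+1)!\sum_n w^n\zeta_{n-1}(\{1\}_j)/n$ and its $\omega_1$-counterpart termwise; the factor $(j+1)$ and the two-term blocks then fall out automatically from $(j+1)!/j!$ and from the two pieces $dw/w$ and $dw/(1-w)$ of the substituted measure. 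Either adopt that ordering or actually perform the coefficient comparison you sketch; as written the argument stops exactly at the point you flag. A smaller issue: the claim is for $|x|\le 1$, including $x<0$ where $U<0$, and the absolute values in \eqref{equ-thm-CAS} are not mere ``analytic continuation'' decoration --- the Remark following the proposition exists precisely because they were mishandled in \cite{WLX2022} --- so the negative-$x$ case needs the same $\log|\cdot|$ care as in the proof of Theorem \ref{thm-AFSN3}.
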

\begin{re}
In \cite[Theorem 3.8]{WLX2022}, the formula omits two absolute value symbol; specifically $\log^{p-1-k}(x)$ should be written as $\log^{p-1-k}(|x|)$, and $\log^{k-j-l} (1-G_m(x)^{-1})$ should be written as $\log^{k-j-l} (|1-G_m^{-1}(x)|)$. This adjustment is necessary because it can be proven that for $x \in [-(m-1)^{m-1}/m^m, (m-1)^{m-1}/m^m]$, the value of $G_m(x)\ (m\geq 2)$ lies within the interval $(1/2, 2]$. In Section \ref{secfour}, we will employ a method similar to that used in the proof of \cite[Theorem 3.8]{WLX2022} to derive an alternative expression for equation \eqref{equ-thm-CAS}. This approach will allow us to establish several identities involving multiple polylogarithm functions. Certainly, a direct integration approach should yield an alternative expression for equation \eqref{equ-thm-CAS}. As can be readily observed from \eqref{equ-thm-CAS-one}, the series on the left-hand side of \eqref{equ-thm-CAS} can be transformed into the following integral:
\begin{align}
\sum_{n=1}^\infty \frac{\binom{2n}{n}}{n^{p+1}4^n}x^n=2\int_0^x \frac{\log^{p-1}\Big(\frac{x}{t}\Big)\log\left(\frac{2}{1+\sqrt{1-t}}\right)}{t}dt\quad (x\in[-1,1]).
\end{align}
Based on this integral, following a process similar to the proof of \cite[Theorem 2.2]{WX2021}, the desired result should be obtainable.
\end{re}

For any $\bfk=(k_1,\dotsc,k_r)\in\N^r$ and $\bfx=(x_1,\ldots,x_r)\in \mathbb{C}^r$, the classical \emph{multiple polylogarithm function} with $r$-variables $\Li_{\bfk}(\bfx)$ is defined by
\begin{align}\label{defn-mpolyf}
\Li_{\bfk}(\bfx)\equiv \Li_{k_1,\dotsc,k_r}(x_1,\dotsc,x_r):=\sum_{n_1>\cdots>n_r>0} \frac{x_1^{n_1}\dotsm x_r^{n_r}}{n_1^{k_1}\dotsm n_r^{k_r}}
\end{align}
which converges if $|x_1\cdots x_j|<1$ for all $j=1,\dotsc,r$. In particular, if $x_1=x,x_2=\cdots=x_r=1$, then $\Li_{k_1,\ldots,k_r}(x,\{1\}_{r-1})$ is the \emph{single-variable multiple polylogarithm function} $\Li_{k_1,\ldots,k_r}(x)$. Clearly, $\Li_1(x)=-\log(1-x)\ (x\in[-1,1))$.
The \eqref{defn-mpolyf} can be analytically continued to a multi-valued meromorphic function on $\mathbb{C}^r$ (see \cite{KMT2023,Zhao2007d,Zhao2010}). In \cite[Lemma 2.2]{KT2018}, Kaneko and Tsumura provided a detailed explanation of how to analytically continue the single-variable multiple polylogarithm to the domain $\mathbb{C}\setminus[1,+\infty)$ via iterated integration. Starting from the series definition, the multiple polylogarithm function can be expressed as an iterated integral as follows:
\begin{align}
\Li_{k_1,\ldots,k_r}(xx_1,x_2/x_1,\ldots,x_r/x_{r-1})=\int_0^x \left(\frac{dt}{t}\right)^{k_1-1}\frac{x_1dt}{1-x_1t}\cdots \left(\frac{dt}{t}\right)^{k_r-1}\frac{x_rdt}{1-x_rt}.
\end{align}
The theory of iterated integrals was developed firstly by K.T. Chen in the 1960's. It has played important roles in the study of algebraic topology and algebraic geometry in the past half century. Its simplest form is
\begin{align*}
\int_0^x f_1(t)dtf_{2}(t)dt\cdots f_p(t)dt:=&\, \int\limits_{x>t_1>\cdots>t_p>0}f_1(t_1)f_{2}(t_{2})\cdots f_p(t_p)dt_1dt_2\cdots dt_p.
\end{align*}
If $(k_1,\dotsc, k_r)\in\N^r$ and $x_1,\ldots,x_r$ are $N$th roots of unity, then \eqref{defn-mpolyf} become the \emph{cyclotomic multiple zeta values of level $N$} which converges if $(k_1,x_1)\ne (1,1)$ (see \cite{YuanZh2014a} and \cite[Ch. 15]{Z2016}). The level two cyclotomic multiple zeta values are called \emph{alternating multiple zeta values}. When $x=1$ in \eqref{equ-thm-CAS}, the series on the left-hand side was shown in \cite[Theorem 2.2]{WX2021} to be expressible as a combination of alternating multiple zeta values.

Setting $p=1$ and $2$ in \eqref{equ-thm-CAS} give
\begin{align}
&\sum_{n=1}^\infty \frac{\binom{2n}{n}}{n^2 4^n}x^n=2\Li_2\left(\frac{1-\sqrt{1-x}}{2}\right)-\log^2\left(\frac{1+\sqrt{1-x}}{2}\right),\label{case-equ-apery-1}\\
&\sum_{n=1}^\infty \frac{\binom{2n}{n}}{n^3 4^n}x^n=2\log\left(\frac{1+\sqrt{1-x}}{2}\right)\Li_2\left(\frac{1-\sqrt{1-x}}{2}\right)-\frac1{3}\log^3\left(\frac{1+\sqrt{1-x}}{2}\right)\nonumber\\
&\qquad\qquad\qquad+2\Li_{2,1}\left(\frac{1-\sqrt{1-x}}{2}\right)+2\Li_3\left(\frac{1-\sqrt{1-x}}{2}\right),\label{case-equ-apery-2}
\end{align}
where we used the well-known identity (see \cite[Lemma 1(ii)]{AM1999})
\begin{align}\label{equ-p-1-Li}
\Li_{\{1\}_p}(x)=\frac{(-1)^p}{p!}\log^p(1-x)\quad (p\in\N).
\end{align}

Substituting $q=1$ and $2$ into Theorem \ref{thm-CPAS} and applying equations \eqref{equ-thm-CAS-zero} and \eqref{equ-thm-CAS-one}, we obtain the following corollaries.

\begin{cor} Let $x$ be a root of unity and $x\neq 1$. For $a\in \mathbb{C}\setminus \Z$, we have
\begin{align}\label{equ-cor-case-CPAS-one}
\sum_{n=-\infty}^\infty \frac{4^{n+a}}{(n+a)\binom{2n+2a}{n+a}}x^{-n}=\frac{x\Li_1(x^{-1};a)-\Li_1(x;1-a)}{\sqrt{1-x}}.
\end{align}
\end{cor}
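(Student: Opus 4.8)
The plan is to obtain the corollary as a direct specialization of Theorem \ref{thm-CPAS} to $q=1$, combined with the closed form \eqref{equ-thm-CAS-zero} for the central binomial sum at exponent zero. First I would substitute $q=1$ into the master identity \eqref{equ-thm-CPAS}. On the right-hand side the summation constraint becomes $k_1+\cdots+k_5=0$, forcing every $k_j=0$; since the exponential complete Bell polynomial of order zero satisfies $Y_0=1$, we have $C_0=D_0=1$, and the entire right-hand side collapses to the single term $x\Li_1(x^{-1};a)-\Li_1(x;1-a)$.

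Next I would simplify the second series on the left-hand side. With $q=1$ the exponent $q-1$ equals $0$, so that series is exactly $\sum_{n=1}^\infty \binom{2n}{n}4^{-n}x^n$, which by \eqref{equ-thm-CAS-zero} equals $(1-x)^{-1/2}-1$ (valid for $|x|\le 1$, $x\ne 1$, and in particular for any root of unity $x\ne 1$). Writing $A:=\Li_1(x;1-a)$ and $B:=x\Li_1(x^{-1};a)$, and noting that $(-1)^q=-1$ at $q=1$, the left-hand side of \eqref{equ-thm-CPAS} becomes $S+(A-B)\big((1-x)^{-1/2}-1\big)$, where $S$ denotes the target series on the left of \eqref{equ-cor-case-CPAS-one}.

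Finally, equating this to the collapsed right-hand side $B-A$ and solving for $S$ is a one-line rearrangement: the $-(A-B)$ term and the constant $+(A-B)$ piece cancel, leaving $S=(B-A)(1-x)^{-1/2}$, which is precisely \eqref{equ-cor-case-CPAS-one}. The computation has no genuine obstacle; the only points requiring care are the sign carried by $(-1)^q$ at $q=1$ and the convention $Y_0=1$ that trivializes the Bell-polynomial weights $C_0,D_0$. The hypothesis $x\ne 1$ is inherited from the validity range of \eqref{equ-thm-CAS-zero}, where the factor $(1-x)^{-1/2}$ must be finite.
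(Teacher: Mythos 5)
Your proposal is correct and follows exactly the route the paper intends: the paper obtains this corollary precisely by setting $q=1$ in Theorem \ref{thm-CPAS}, noting that the Bell-polynomial sum collapses to the single term with all $k_j=0$ (where $C_0=D_0=1$), and substituting the closed form \eqref{equ-thm-CAS-zero} for the remaining central binomial series. The sign bookkeeping with $(-1)^q=-1$ and the final cancellation leaving the factor $(1-x)^{-1/2}$ are exactly as you describe.
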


\begin{cor} Let $x$ be a root of unity and let $a\in \mathbb{C}\setminus \Z$. Then
\begin{align}\label{equ-cor-case-CPAS}
\sum_{n=-\infty}^\infty \frac{4^{n+a}}{(n+a)^2\binom{2n+2a}{n+a}}x^{-n}&=\Li_2(x;1-a)+x\Li_2(x^{-1};a)\nonumber\\
&\quad-2(\Li_1(x;1-a)-x\Li_1(x^{-1};a))\log(1+\sqrt{1-x}).
\end{align}
\end{cor}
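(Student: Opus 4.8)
The plan is to specialize Theorem~\ref{thm-CPAS} to $q=2$ and simplify the resulting expression. First I would put $q=2$ in \eqref{equ-thm-CPAS}: then $(-1)^q=1$, and the auxiliary series $\sum_{n=1}^\infty \binom{2n}{n}x^n/(n^{q-1}4^n)$ reduces to $\sum_{n=1}^\infty \binom{2n}{n}x^n/(n\,4^n)$, which by \eqref{equ-thm-CAS-one} equals $2\log\bigl(2/(1+\sqrt{1-x})\bigr)$. Abbreviating $L:=\Li_1(x;1-a)-x\Li_1(x^{-1};a)$, the left-hand side of \eqref{equ-thm-CPAS} thus becomes $\sum_{n=-\infty}^\infty 4^{n+a}x^{-n}/\bigl((n+a)^2\binom{2n+2a}{n+a}\bigr)-2L\log\bigl(2/(1+\sqrt{1-x})\bigr)$.

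Next I would evaluate the right-hand side of \eqref{equ-thm-CPAS}. With $q=2$ the multi-index runs over all $(k_1,\dots,k_5)$ with $k_1+\cdots+k_5=1$, namely the five tuples having a single entry equal to $1$. The crucial input is the set of low-order Bell-polynomial values: since $Y_0=1$ and $Y_1(x_1)=x_1$, and the first argument in both \eqref{defn-C-squ} and \eqref{defn-D-squ} is $0$, we get $C_0=D_0=1$ and, decisively, $C_1=D_1=0$. Hence the three tuples $(1,0,0,0,0)$, $(0,1,0,0,0)$, $(0,0,1,0,0)$ contribute nothing, and only $(0,0,0,1,0)$ and $(0,0,0,0,1)$ remain. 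Reading off the factor $x\Li_{k_5+1}(x^{-1};a)-(-1)^{k_5}\Li_{k_5+1}(x;1-a)$ at $k_5=0$ and $k_5=1$, these produce $2\log(2)\bigl(x\Li_1(x^{-1};a)-\Li_1(x;1-a)\bigr)=-2L\log(2)$ and $x\Li_2(x^{-1};a)+\Li_2(x;1-a)$, respectively.

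Finally I would equate the two sides and clean up the logarithms. This yields
\[
\sum_{n=-\infty}^\infty \frac{4^{n+a}}{(n+a)^2\binom{2n+2a}{n+a}}x^{-n}=2L\log\Bigl(\frac{2}{1+\sqrt{1-x}}\Bigr)-2L\log(2)+x\Li_2(x^{-1};a)+\Li_2(x;1-a).
\]
Expanding $\log\bigl(2/(1+\sqrt{1-x})\bigr)=\log(2)-\log(1+\sqrt{1-x})$ makes the two $\log(2)$ contributions cancel, leaving $-2L\log(1+\sqrt{1-x})$; substituting back $L=\Li_1(x;1-a)-x\Li_1(x^{-1};a)$ gives exactly \eqref{equ-cor-case-CPAS}.

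Granting Theorem~\ref{thm-CPAS}, the argument is essentially mechanical; the only places demanding care are the evaluation $C_1=D_1=0$ (which is what annihilates three of the five tuples) and the sign bookkeeping in the factor $x\Li_{k_5+1}(x^{-1};a)-(-1)^{k_5}\Li_{k_5+1}(x;1-a)$. The sole ``obstacle'' is organizational: one must check that the $\log(2)$ terms arising from the Bell-polynomial expansion cancel precisely against the $\log(2)$ generated by \eqref{equ-thm-CAS-one}, which they do.
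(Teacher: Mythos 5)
Your proposal is correct and follows exactly the route the paper intends: the paper derives this corollary by substituting $q=2$ into Theorem~\ref{thm-CPAS} and invoking \eqref{equ-thm-CAS-one}, and your detailed bookkeeping (using $C_0=D_0=1$, $C_1=D_1=0$ from Lemma~\ref{Lem.CD} to kill three of the five tuples, and cancelling the two $\log(2)$ contributions) fills in precisely the computation the paper leaves implicit.
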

Furthermore, taking the limit as $a\rightarrow 1/2$ in Theorem \ref{thm-CPAS} and noting that for negative integers $n$, the term $\binom{2n+2a}{n+a}^{-1}\rightarrow 0$, while for $n\in \N_0:=\N\cup\{0\}$, $\binom{2n+2a}{n+a}^{-1}\rightarrow (2n+2)\binom{2n+2}{n+1} \pi/16^{n+1}$, we arrive at the following corollary:
\begin{cor}\label{cor-CPAS-casea} Let $x$ be a root of unity. For $q\in\N$ and $(q,x)\neq (1,1)$, we have
\begin{align}\label{equ-cor-CPAS-casea}
&\sum_{n=1}^\infty \frac{n}{(n-1/2)^q}\frac{\binom{2n}{n}}{4^n}x^{1-n}\nonumber\\
&=\sum_{k_1+\cdots+k_5=q-1,\atop \forall k_j\geq 0} \frac{C_{k_1}C_{k_2}D_{k_3}2^{k_3+k_4}}{k_1!k_2!k_3!k_4!}\frac{\log^{k_4}(2)}{\pi}\left(x\Li_{k_5+1}(x^{-1};1/2)-(-1)^{k_5}\Li_{k_5+1}(x;1/2)\right)\nonumber\\
&\quad+\frac{(-1)^q}{\pi}\left(\Li_1(x;1/2)-x\Li_1(x^{-1};1/2)\right) \sum_{n=1}^\infty \frac{\binom{2n}{n}}{n^{q-1}4^n}x^n.
\end{align}
\end{cor}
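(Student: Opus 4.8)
The plan is to derive \eqref{equ-cor-CPAS-casea} directly from Theorem \ref{thm-CPAS} by letting $a\to 1/2$. Apart from the main doubly infinite sum, every term in \eqref{equ-thm-CPAS} depends on $a$ only through the cyclotomic Hurwitz zeta values $\Li_{p}(x;1-a)$ and $\Li_{p}(x^{-1};a)$; since $\Li_{p}(y;b+1)=\sum_{n\ge 1}y^n/(n+b)^p$ is continuous (indeed holomorphic) in $b$ near $b=-1/2$ — the denominators $n-1/2$ being bounded away from $0$ — these converge by mere substitution to $\Li_{p}(x;1/2)$ and $\Li_{p}(x^{-1};1/2)$ as $a\to1/2$ (the order-one values at $x=1$ requiring the grouping discussed below). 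Thus the whole problem reduces to evaluating the limit of the left-hand series $S(a):=\sum_{n=-\infty}^{\infty}4^{n+a}(n+a)^{-q}\binom{2n+2a}{n+a}^{-1}x^{-n}$.

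First I would split $S(a)$ at $n=0$. Writing $\binom{2n+2a}{n+a}=\Gamma(2n+2a+1)/\Gamma(n+a+1)^2$, I observe that for each fixed negative integer $n$ the numerator $\Gamma(2n+2a+1)\to\Gamma(2n+2)$ has a pole at the non-positive integer $2n+2$, while the denominator $\Gamma(n+a+1)^2\to\Gamma(n+3/2)^2$ stays finite and nonzero, so $\binom{2n+2a}{n+a}^{-1}\to0$ and the negative part of $S(a)$ vanishes in the limit, in agreement with the stated convention. For $n\ge0$ I would use the Legendre duplication formula, in the form $\Gamma(n+3/2)=\sqrt{\pi}\,(2n+2)!/\big(4^{n+1}(n+1)!\big)$, to obtain
\begin{align*}
\binom{2n+2a}{n+a}^{-1}\longrightarrow (2n+2)\binom{2n+2}{n+1}\frac{\pi}{16^{n+1}}\qquad(a\to1/2),
\end{align*}
exactly the limit recorded before the statement.

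Substituting this into the $n\ge0$ part, simplifying $4^{n+1/2}/16^{n+1}=1/(8\cdot4^{n})$, and reindexing by $m=n+1$ collapses the positive part of $S(a)$ to $\pi\sum_{m\ge1}\frac{m}{(m-1/2)^{q}}\frac{\binom{2m}{m}}{4^{m}}x^{1-m}$, i.e.\ precisely $\pi$ times the left-hand side of \eqref{equ-cor-CPAS-casea}. Feeding the three limits back into \eqref{equ-thm-CPAS}, dividing by $\pi$, and transposing the prefactor term to the right then yields \eqref{equ-cor-CPAS-casea}.

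The step I expect to be the real obstacle is justifying that $\lim_{a\to1/2}$ may be interchanged with the summation in $S(a)$. By Stirling, $4^{n+a}\binom{2n+2a}{n+a}^{-1}$ grows like $\sqrt{n}$, so the general term decays only like $n^{1/2-q}$; for $q=1$ the series is therefore merely conditionally convergent (using $|x|=1$, $x\neq1$) and dominated convergence is unavailable. I would resolve this by Abel--Dirichlet summation, grouping the series so as to exploit the cancellation from $x^{-n}$ with $|x|=1$ and thereby establishing local uniform convergence of $S(a)$ in $a$ near $1/2$; alternatively, one may argue that the right-hand side of \eqref{equ-thm-CPAS} and the prefactor term are holomorphic in $a$ near $1/2$, so that $S(a)$ extends continuously across $a=1/2$ and its boundary value agrees with the termwise limit computed above. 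A related care point arises when $x=1$: the order-one Hurwitz values $\Li_1(1;1/2)$ appearing for $k_5=0$ diverge individually, and the differences $x\Li_1(x^{-1};1/2)-\Li_1(x;1/2)$ must be treated as grouped limits of the convergent combinations $\Li_1(x;1-a)-x\Li_1(x^{-1};a)$, which tend to $0$ as $a\to1/2$.
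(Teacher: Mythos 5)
Your proposal is correct and follows essentially the same route as the paper, which obtains the corollary precisely by letting $a\to 1/2$ in Theorem \ref{thm-CPAS} and using the limits $\binom{2n+2a}{n+a}^{-1}\to 0$ for $n<0$ and $\binom{2n+2a}{n+a}^{-1}\to (2n+2)\binom{2n+2}{n+1}\pi/16^{n+1}$ for $n\ge 0$. Your additional care about interchanging the limit with the summation and about the grouped interpretation of the weight-one terms when $x=1$ goes beyond what the paper records, but does not change the argument.
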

In particular, setting $x=\pm 1$ in Corollary \ref{cor-CPAS-casea} yields the two known results \cite[Theorems 3.4 and 3.5]{WLX2022}. As two examples, setting $q=1,2$ in \eqref{equ-cor-CPAS-casea} or letting $a=1/2$ in \eqref{equ-cor-case-CPAS-one} and \eqref{equ-cor-case-CPAS} yield
\begin{align}
\sum_{n=1}^\infty \frac{n}{(n-1/2)}\frac{\binom{2n}{n}}{4^n}x^{1-n}=\frac{1}{\pi} \frac{x\Li_1(x^{-1};1/2)-\Li_1(x;1/2)}{\sqrt{1-x}}
\end{align}
and
\begin{align}
\sum_{n=1}^\infty \frac{n}{(n-1/2)^2}\frac{\binom{2n}{n}}{4^n}x^{1-n}&=\frac{\Li_2(x;1/2)+x\Li_2(x^{-1};1/2)}{\pi}\nonumber\\
&\quad-\frac{2}{\pi}(\Li_1(x;1/2)-x\Li_1(x^{-1};1/2))\log(1+\sqrt{1-x}).
\end{align}
Clearly, a closed form for the Cyclotomic Ap\'ery-like series \[\sum_{n=1}^\infty \frac{1}{(n-1/2)^q}\frac{\binom{2n}{n}}{4^n}x^{1-n}\quad (q,x)\neq (1,1)\] can be derived from \eqref{equ-cor-CPAS-casea}.

It should be emphasized that the term $\Li_{m+1}(x;1-a)-(-1)^m x\Li_{m+1}(x^{-1};a)$ appearing in this paper inherently admits an alternative expression, as presented in Theorem \ref{thm-main-two} below.
\begin{thm} (\cite[Thm. 2.8]{Xu2026})\label{thm-main-two} Let $x=e^{i\theta}$ and $\theta\in (0,2\pi)$. For $a\in \mathbb{C}\setminus \Z$ and $m\in \N_0$, we have
\begin{align}\label{equ-thm-main-two}
\Li_{m+1}(x;1-a)-(-1)^m x\Li_{m+1}(x^{-1};a)=\frac{\pi}{m!}\frac{d^m}{da^m}\left(ix^a-\cot(\pi a)x^a\right),
\end{align}
where $i^2=-1$.
\end{thm}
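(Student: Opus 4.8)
The plan is to strip the identity down to its $m=0$ instance, evaluate the resulting bilateral series by a Fourier expansion, and then recover the general case by differentiating in $a$. First I would unwind the definitions on the left-hand side of \eqref{equ-thm-main-two}: taking $b=-a$ gives $\Li_{m+1}(x;1-a)=\sum_{n\ge 1}x^{n}/(n-a)^{m+1}$, while taking $b=a-1$ and shifting $k=n-1$ gives $x\,\Li_{m+1}(x^{-1};a)=\sum_{k\ge 0}x^{-k}/(k+a)^{m+1}$. Hence the left-hand side equals $\sum_{n\ge1}\frac{x^n}{(n-a)^{m+1}}-(-1)^m\sum_{k\ge0}\frac{x^{-k}}{(k+a)^{m+1}}$. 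Using the elementary relations $\frac{1}{(n-a)^{m+1}}=\frac{1}{m!}\frac{d^m}{da^m}\frac{1}{n-a}$ and $\frac{1}{(k+a)^{m+1}}=\frac{(-1)^m}{m!}\frac{d^m}{da^m}\frac{1}{k+a}$, the prefactor $-(-1)^m$ on the second sum collapses to $-\frac{1}{m!}\frac{d^m}{da^m}$, so the left-hand side becomes $\frac{1}{m!}\frac{d^m}{da^m}S(a)$, where $S(a):=\sum_{n\ge1}\frac{x^n}{n-a}-\sum_{k\ge0}\frac{x^{-k}}{k+a}$. Finally, peeling the $k=0$ term off the second sum to supply the $n=0$ term, I would recognise $S(a)$ as the single bilateral series $\sum_{n=-\infty}^{\infty}\frac{e^{in\theta}}{n-a}$.

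The core computation is the evaluation of this bilateral series, which I would carry out as the Fourier series on $(0,2\pi)$ of a suitable exponential. A direct integration gives $\frac{1}{2\pi}\int_0^{2\pi}C e^{ia\theta}e^{-in\theta}\,d\theta=-\frac{C(e^{2\pi ia}-1)}{2\pi i}\cdot\frac{1}{n-a}$, so the choice $C=-2\pi i/(e^{2\pi ia}-1)$ makes these coefficients exactly $1/(n-a)$. Since $a\notin\Z$, the function $e^{ia\theta}$ is piecewise $C^1$ with a single jump at the endpoints, so its Fourier series converges pointwise to it on the open interval $(0,2\pi)$, yielding $S(a)=-\frac{2\pi i}{e^{2\pi ia}-1}\,e^{ia\theta}$. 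Factoring $e^{2\pi ia}-1=2i\,e^{\pi ia}\sin(\pi a)$ and then using $-e^{-\pi ia}=i\sin(\pi a)-\cos(\pi a)$ simplifies the prefactor to $\pi\bigl(i-\cot(\pi a)\bigr)$, so that $S(a)=\pi\bigl(ix^a-\cot(\pi a)x^a\bigr)$, which is exactly the $m=0$ case of \eqref{equ-thm-main-two}.

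Applying $\frac{1}{m!}\frac{d^m}{da^m}$ to this closed form for $S(a)$ then produces the right-hand side of \eqref{equ-thm-main-two} for every $m\in\N_0$, completing the argument. The main obstacle I anticipate is the analytic bookkeeping for the two limiting operations: differentiating the defining series term by term in $a$, and summing the conditionally convergent bilateral series. For $m\ge1$ both one-sided series converge absolutely and locally uniformly in $a$ off the integer poles, so term-by-term differentiation is routine; the delicate point is the $m=0$ series $\sum x^n/(n-a)$, which converges only conditionally. I would secure its convergence by Dirichlet's test, using that the partial sums of $e^{in\theta}$ are bounded for $\theta\in(0,2\pi)$, and I would legitimise the evaluation $S(a)=\pi(ix^a-\cot(\pi a)x^a)$ by the standard pointwise-convergence theorem for Fourier series of piecewise-$C^1$ functions. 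The restriction $\theta\in(0,2\pi)$ is precisely what excludes the jump point $\theta\equiv 0$, where the series would instead converge to the average of the two one-sided boundary values.
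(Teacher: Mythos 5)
Your proposal is correct and follows essentially the same route as the paper: both reduce the identity to its $m=0$ instance, evaluate the bilateral series $\sum_{n\in\Z}e^{in\theta}/(n-a)$ in closed form, and then differentiate $m$ times in $a$. The only difference is that the paper quotes this bilateral-series evaluation from Whittaker--Watson, whereas you derive it yourself as the Fourier expansion of $e^{ia\theta}$ on $(0,2\pi)$ and supply the convergence bookkeeping, which makes your write-up self-contained but not a different argument.
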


An explicit formula for arbitrary high-order derivatives of $\cot(\pi a)$ can be found in \cite[Thm. 2.3]{Xu18}.
As an example, setting $m=1$ in \eqref{equ-thm-main-two} gives
\begin{align*}
\Li_2(x;1-a)+x\Li_2(x^{-1};a)=\pi^2\csc^2(\pi a)x^a+\pi(i-\cot(\pi a))x^a\ln x.
\end{align*}
Due to the complexity of the higher-order derivative formulas for $\cot(\pi a)$, this paper retains the original form of the left-hand side of \eqref{equ-thm-main-two} rather than rewriting it into its right-hand equivalent.

\section{Preliminaries}\label{sectwo}
In this section, we present some fundamental knowledge and lemmas required for the proof of Theorem \ref{thm-CPAS}.

First, we present the definition of the exponential complete Bell polynomials and provide a brief overview of their key properties. The \emph{exponential complete Bell polynomials} $Y_n(x_1,\ldots,x_n)$ are defined by
\begin{equation}\label{cBell.gf}
\exp\left(\sum_{k=1}^{\infty}x_k\frac{t^k}{k!}\right)
    =\sum_{n=0}^{\infty}Y_n(x_1,x_2,\ldots,x_n)\frac{t^n}{n!}\,,
\end{equation}
and satisfy the recurrence
\begin{equation}\label{cBell.rec}
Y_n(x_1,x_2,\ldots,x_n)=\sum_{j=0}^{n-1}\binom{n-1}{j}x_{n-j}Y_j(x_1,x_2,\ldots,x_j)
    \,,\quad n\geq1\,;
\end{equation}
see \cite[Section 3.3]{C1974} and \cite[Section 2.8]{Riordan58}. From \cite[Eqs. (2.5) and (2.9)]{Xu17.MZVES}, it is known that
\begin{equation}\label{zn*1k}
\int_0^1t^{n-1}\log^k(1-t)dt
    =(-1)^kk!\frac{\zeta_n^\star(\{1\}_k)}{n}\,,
\end{equation}
for $n\geq1$ and $k\geq0$, and $\zeta_n^\star(\{1\}_k)=\frac{1}{k!}Y_k(H_n,1!H_n^{(2)},2!H_n^{(3)},\ldots,(k-1)!H_n^{(k)})$. Here $\zeta^\star_n(\{1\}_k)$ is a special case of the \emph{multiple harmonic star sum}, defined as:
\begin{align}\label{defn-MHSS-1}
\zeta^\star_n(\{1\}_k):=\sum_{n\geq n_1\geq n_2\geq \cdots \geq n_k\geq 1} \frac1{n_1n_2\cdots n_k}\quad (k\in\N_0).
\end{align}
It can be associated with a \emph{multiple harmonic sum}, defined as:
\begin{align}\label{defn-MHS-1}
\zeta_n(\{1\}_k):=\sum_{n\geq n_1>n_2> \cdots > n_k\geq 1} \frac1{n_1n_2\cdots n_k}\quad (k\in\N_0).
\end{align}

We now present several lemmas that will play an essential role in the subsequent proofs.

\begin{lem}(\cite[Lemma 2.1]{Flajolet-Salvy})\label{Lem.Res}
Let $\xi(s)$ be a kernel function and let $r(s)$ be a rational function which is $O(s^{-2})$ at infinity. Then
\begin{equation}\label{Cau.Lind}
\sum_{\alpha \in O}{\rm Res}(r(s)\xi(s),\alpha)
    +\sum_{\beta\in S}{\rm Res}(r(s)\xi(s),\beta)=0\,,
\end{equation}
where $S$ is the set of poles of $r(s)$ and $O$ is the set of poles of $\xi(s)$ that are not poles of $r(s)$. Here ${\rm Res}(h(s),\alpha)$ denotes the residue of $h(s)$ at $s=\alpha$, and the kernel function $\xi(s)$ is meromorphic in the whole complex plane and satisfies $\xi(s)=o(s)$ over an infinite collection of circles $|s|=\rho_k$ with $\rho_k\to+\infty$.
\end{lem}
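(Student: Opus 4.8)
The plan is to derive \eqref{Cau.Lind} as a direct application of the Cauchy residue theorem to $r(s)\xi(s)$ over the family of expanding circles $C_k:|s|=\rho_k$ furnished by the definition of the kernel function, and then to let the enclosing contour grow to infinity. First I would record the structural facts that organize the poles. Since $r(s)$ is rational and $O(s^{-2})$ at infinity, it has only finitely many poles, which are precisely the elements of $S$; the function $\xi(s)$ is meromorphic on all of $\mathbb{C}$ and hence has isolated poles; and every pole of the product $r(s)\xi(s)$ lies in the disjoint union $S\sqcup O$, where $O$ collects those poles of $\xi(s)$ that are not already poles of $r(s)$. For each $k$ (adjusting $\rho_k$ if necessary so that $C_k$ passes through no pole, which is possible because the poles are isolated) the residue theorem gives
\[
\frac{1}{2\pi i}\oint_{C_k} r(s)\xi(s)\,ds=\sum_{\text{poles of }r\xi\text{ inside }C_k}{\rm Res}\bigl(r(s)\xi(s),\cdot\bigr).
\]

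The decisive step is to show the left-hand integral tends to $0$. On $C_k$ the hypothesis $r(s)=O(s^{-2})$ gives $|r(s)|\le K\rho_k^{-2}$ for a fixed constant $K$, while the kernel condition gives $|\xi(s)|=o(\rho_k)$ uniformly on the circle. Multiplying, $|r(s)\xi(s)|=o(\rho_k^{-1})$ on $C_k$, so the standard $ML$-estimate yields
\[
\left|\oint_{C_k} r(s)\xi(s)\,ds\right|\le 2\pi\rho_k\cdot o(\rho_k^{-1})=o(1),
\]
and the right-hand side tends to $0$ as $k\to\infty$. Since $\rho_k\to+\infty$, each fixed pole is eventually interior to $C_k$, so the partial sums above run over an exhaustion of all poles of $r\xi$. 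Passing to the limit forces the total sum of residues to vanish; separating the finitely many contributions indexed by $S$ from those indexed by $O$ produces exactly \eqref{Cau.Lind}.

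The main obstacle is not the estimate but the bookkeeping when $\xi$ has infinitely many poles: the symbol $\sum_{\alpha\in O}$ must then be read as the limit of the partial sums taken along the circles $C_k$, and this limit exists precisely because those partial sums equal $\frac{1}{2\pi i}\oint_{C_k}$, which tends to $0$, while $\sum_{\beta\in S}$ is a fixed finite quantity. A second point requiring attention is that the radii $\rho_k$ must simultaneously avoid the poles and preserve the bound $\xi(s)=o(\rho_k)$; this compatibility is exactly what is built into the notion of kernel function, so invoking the hypothesis suffices and no extra construction is needed. I would close by noting that in the applications the relevant kernels (such as $\pi\cot(\pi s)$, $\pi^2\csc^2(\pi s)$, or digamma-type functions) manifestly satisfy the stated growth on circles passing midway between consecutive integers, so the lemma applies in the form used throughout the paper.
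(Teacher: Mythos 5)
Your proof is correct and is essentially the argument behind the cited result: the paper itself gives no proof (it quotes \cite[Lemma 2.1]{Flajolet-Salvy}), and the standard proof there is exactly your combination of the residue theorem on the circles $|s|=\rho_k$ with the $ML$-estimate $|r(s)\xi(s)|=O(\rho_k^{-2})\cdot o(\rho_k)=o(\rho_k^{-1})$, together with reading $\sum_{\alpha\in O}$ as the limit of partial sums along those circles. Your handling of the two side issues (circles avoiding poles, and convergence of the infinite residue sum) is also the right one.
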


It is clear that the formula (\ref{Cau.Lind}) is also true if $r(s)\xi(s)=o(s^{-\alpha})$ $(\alpha>1)$ over an infinite collection of circles $|s|=\rho_k$ with $\rho_k\to+\infty$.

\begin{lem}(\cite[Lemma 3.2]{WX2021})\label{Lem.CD}
For $|s|<1$, the following identities hold:
\[
\Gamma(s+1){\rm e}^{\gamma s}=\sum_{n=0}^\infty C_n\frac{s^n}{n!}\quad\text{and}\quad
\{\Gamma(s+1){\rm e}^{\gamma s}\}^{-1}=\sum_{n=0}^\infty D_n\frac{s^n}{n!}\,,
\]
where $\gamma:=\lim_{n\to\infty}\left(\sum_{k=1}^n \frac1{k}-\log n\right)$ is the \emph{Euler-Mascheroni constant}. The definitions of $C_n$ and $D_n$ can be found in \eqref{defn-C-squ} and \eqref{defn-D-squ}, respectively. In particular, we have $(C_k)_{k\in\mathbb{N}_0}=\left(1,0,\ze(2),-2\ze(3),\frac{27}{2}\ze(4),\ldots\right)$ and $(D_k)_{k\in\mathbb{N}_0}=\left(1,0,-\ze(2),2\ze(3),\frac{3}{2}\ze(4),\ldots\right)$.
\end{lem}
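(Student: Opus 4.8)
The plan is to reduce both expansions to a single computation, namely the Taylor coefficients of $\log\bigl(\Gamma(s+1)\mathrm{e}^{\gamma s}\bigr)$ at the origin, and then to feed these coefficients directly into the defining generating function \eqref{cBell.gf} of the exponential complete Bell polynomials. The natural starting point is the Weierstrass product for the Gamma function, which after multiplication by $\mathrm{e}^{-\gamma s}$ reads
\[
\frac{1}{\Gamma(s+1)\mathrm{e}^{\gamma s}}=\prod_{n=1}^\infty\Bigl(1+\frac{s}{n}\Bigr)\mathrm{e}^{-s/n}.
\]
The nearest singularity of the left-hand side to the origin is the pole of $\Gamma$ at $s=-1$, at distance $1$, which is exactly the radius of convergence $|s|<1$ asserted in the lemma.

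First I would take logarithms of this identity, obtaining $-\log\bigl(\Gamma(s+1)\mathrm{e}^{\gamma s}\bigr)=\sum_{n\ge1}\bigl[\log(1+s/n)-s/n\bigr]$, where the absorption of $\mathrm{e}^{\gamma s}$ has already removed the Euler-Mascheroni term. Expanding each summand as $\log(1+s/n)-s/n=\sum_{k\ge2}\frac{(-1)^{k-1}}{k}\frac{s^k}{n^k}$ and interchanging the two summations (permissible by absolute convergence for $|s|<1$) collapses the inner sum into a Riemann zeta value and yields the classical expansion
\[
\log\bigl(\Gamma(s+1)\mathrm{e}^{\gamma s}\bigr)=\sum_{k=2}^\infty\frac{(-1)^k\zeta(k)}{k}\,s^k,
\]
whose constant and linear terms both vanish. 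Writing the right-hand side as $\sum_{k\ge1}x_k\,s^k/k!$, I read off $x_1=0$ and $x_k=(-1)^k(k-1)!\,\zeta(k)$ for $k\ge2$; these are precisely the arguments listed in the definition \eqref{defn-C-squ} of $C_n$.

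With this expansion established, the first identity is immediate: exponentiating and invoking \eqref{cBell.gf} gives $\Gamma(s+1)\mathrm{e}^{\gamma s}=\exp\bigl(\sum_{k\ge1}x_k s^k/k!\bigr)=\sum_{n\ge0}Y_n(x_1,\ldots,x_n)\,s^n/n!=\sum_{n\ge0}C_n\,s^n/n!$. For the reciprocal I would simply negate the exponent: $\{\Gamma(s+1)\mathrm{e}^{\gamma s}\}^{-1}=\exp\bigl(\sum_{k\ge1}(-x_k)s^k/k!\bigr)$, so the same generating function now produces $Y_n(-x_1,\ldots,-x_n)$. Since $-x_1=0$ and $-x_k=(-1)^{k-1}(k-1)!\,\zeta(k)$, these coincide with the arguments in the definition \eqref{defn-D-squ} of $D_n$, which gives the second identity. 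The only step demanding genuine care is the derivation of the log-Gamma expansion---chiefly the termwise summation over $n$ and the observation that the linear coefficient cancels against $\gamma$---whereas everything afterwards is a transparent substitution into the Bell-polynomial generating function; I therefore anticipate no substantive obstacle beyond this bookkeeping.
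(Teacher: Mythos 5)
Your proof is correct: the Weierstrass product for $1/\Gamma(s+1)$, the resulting expansion $\log\bigl(\Gamma(s+1)\mathrm{e}^{\gamma s}\bigr)=\sum_{k\ge2}(-1)^k\zeta(k)s^k/k$, and the substitution of its coefficients $x_k=(-1)^k(k-1)!\,\zeta(k)$ (and their negatives) into the Bell-polynomial generating function \eqref{cBell.gf} exactly reproduce the definitions \eqref{defn-C-squ} and \eqref{defn-D-squ}, and the radius $|s|<1$ is correctly accounted for by the pole of $\Gamma(s+1)$ at $s=-1$. The present paper gives no proof of this lemma, quoting it from \cite[Lemma 3.2]{WX2021}; your argument is the standard one used there, so there is nothing to add.
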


\begin{lem}(\cite[Lemma 3.3]{WX2021})\label{Lem.AB}
For nonnegative integer $n$, when $s\to-n$, we have
\begin{align*}
&\Gamma(s){\rm e}^{\gamma(s-1)}
    =\frac{(-1)^n}{n!}{\rm e}^{-\gamma(n+1)}\sum_{k=0}^\infty A_k(n)(s+n)^{k-1}\,,\\
&\frac{1}{\Gamma(s){\rm e}^{\gamma(s-1)}}
    =(-1)^nn!{\rm e}^{\gamma(n+1)}\sum_{k=0}^\infty B_k(n)(s+n)^{k+1}\,,
\end{align*}
where
\[
A_k(n):=\sum_{{k_1+k_2=k,\atop k_1,k_2\geq 0}}
    \zeta_n^\star(\{1\}_{k_1})\frac{C_{k_2}}{k_2!}\,,\quad
B_k(n):=\sum_{{k_1+k_2=k,\atop k_1,k_2\geq 0}}
    (-1)^{k_1}\zeta_n(\{1\}_{k_1})\frac{D_{k_2}}{k_2!}\,.
\]
\end{lem}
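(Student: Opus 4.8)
The plan is to reduce the local behaviour of $\Gamma(s)\mathrm{e}^{\gamma(s-1)}$ at the negative integer $s=-n$ to the Taylor expansion at the origin already recorded in Lemma \ref{Lem.CD}, by peeling off the pole that $\Gamma$ contributes through its functional equation. First I would set $\eps:=s+n$ and use $\Gamma(s)=\Gamma(s+n+1)/\prod_{j=0}^{n}(s+j)$ to write
\[
\Gamma(s)=\frac{\Gamma(1+\eps)}{\prod_{j=0}^{n}(\eps-(n-j))}=\frac{(-1)^n}{n!\,\eps}\cdot\frac{\Gamma(1+\eps)}{\prod_{i=1}^{n}(1-\eps/i)},
\]
where the denominator has been factored as $\eps\cdot(-1)^n n!\prod_{i=1}^{n}(1-\eps/i)$. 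Multiplying by $\mathrm{e}^{\gamma(s-1)}=\mathrm{e}^{-\gamma(n+1)}\mathrm{e}^{\gamma\eps}$ isolates the prefactor $\tfrac{(-1)^n}{n!}\mathrm{e}^{-\gamma(n+1)}$ and the simple pole $\eps^{-1}$, so the first assertion reduces to verifying the single power-series identity
\[
\frac{\Gamma(1+\eps)\mathrm{e}^{\gamma\eps}}{\prod_{i=1}^{n}(1-\eps/i)}=\sum_{k=0}^{\infty}A_k(n)\eps^{k},\qquad |\eps|<1.
\]

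Next I would identify the two factors on the left as generating functions and form their Cauchy product. By Lemma \ref{Lem.CD}, $\Gamma(1+\eps)\mathrm{e}^{\gamma\eps}=\sum_{k_2\geq 0}C_{k_2}\eps^{k_2}/k_2!$. For the remaining factor I would expand each geometric series $1/(1-\eps/i)=\sum_{j\geq 0}(\eps/i)^j$ and observe that the coefficient of $\eps^{k_1}$ in $\prod_{i=1}^{n}(1-\eps/i)^{-1}$ is the sum over weakly decreasing indices $n\ge i_1\ge\cdots\ge i_{k_1}\ge 1$ of $(i_1\cdots i_{k_1})^{-1}$, i.e.\ the star value $\zeta_n^\star(\{1\}_{k_1})$ of \eqref{defn-MHSS-1}. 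Multiplying the two series and collecting the coefficient of $\eps^{k}$ gives exactly $A_k(n)=\sum_{k_1+k_2=k}\zeta_n^\star(\{1\}_{k_1})C_{k_2}/k_2!$, which is the first expansion.

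For the reciprocal I would argue symmetrically: $\frac{1}{\Gamma(s)\mathrm{e}^{\gamma(s-1)}}=(-1)^n n!\,\mathrm{e}^{\gamma(n+1)}\,\eps\,\dfrac{\prod_{i=1}^{n}(1-\eps/i)}{\Gamma(1+\eps)\mathrm{e}^{\gamma\eps}}$, so it suffices to expand $\prod_{i=1}^{n}(1-\eps/i)\big/\big(\Gamma(1+\eps)\mathrm{e}^{\gamma\eps}\big)$ in powers of $\eps$. Here $\{\Gamma(1+\eps)\mathrm{e}^{\gamma\eps}\}^{-1}=\sum_{k_2\geq 0}D_{k_2}\eps^{k_2}/k_2!$ by Lemma \ref{Lem.CD}, while $\prod_{i=1}^{n}(1-\eps/i)$ is a polynomial whose coefficient of $\eps^{k_1}$ is the elementary symmetric function $(-1)^{k_1}\sum_{n\ge i_1>\cdots>i_{k_1}\ge 1}(i_1\cdots i_{k_1})^{-1}=(-1)^{k_1}\zeta_n(\{1\}_{k_1})$ of \eqref{defn-MHS-1} (now with strict inequalities, since each factor supplies at most one power of $\eps$). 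The Cauchy product yields $B_k(n)=\sum_{k_1+k_2=k}(-1)^{k_1}\zeta_n(\{1\}_{k_1})D_{k_2}/k_2!$, completing the second expansion.

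The only genuine points requiring care are the two combinatorial generating-function identities $\prod_{i=1}^{n}(1-\eps/i)^{-1}=\sum_k\zeta_n^\star(\{1\}_k)\eps^k$ and $\prod_{i=1}^{n}(1-\eps/i)=\sum_k(-1)^k\zeta_n(\{1\}_k)\eps^k$, which are the heart of the argument and encode the familiar dictionary ``star sums $\leftrightarrow$ repeated indices'' versus ``ordinary sums $\leftrightarrow$ distinct indices.'' The analytic side is routine: $\Gamma(1+\eps)\mathrm{e}^{\gamma\eps}$ and its reciprocal are holomorphic and non-vanishing on $|\eps|<1$, and the factors $(1-\eps/i)^{\pm1}$ are holomorphic there as well (their poles lie at $\eps=i\ge 1$), so every Cauchy product converges for $|\eps|<1$ and the termwise manipulations are legitimate. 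I expect the main obstacle to be a clean bookkeeping proof of the star-sum generating function; I would either expand the product directly and count multisets of size $k$ drawn from $\{1,\dots,n\}$, or induct on $n$ by splitting off the factor $1/(1-\eps/n)$.
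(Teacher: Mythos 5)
The paper states Lemma \ref{Lem.AB} purely as a citation to \cite[Lemma 3.3]{WX2021} and gives no proof, so there is no internal argument to compare against; judged on its own, your proposal is correct and is the standard derivation one would expect. The reduction via $\Gamma(s)=\Gamma(s+n+1)/\prod_{j=0}^{n}(s+j)$ with $\varepsilon=s+n$, the use of Lemma \ref{Lem.CD} for $\Gamma(1+\varepsilon)e^{\gamma\varepsilon}$ and its reciprocal, and the identification of the coefficients of $\prod_{i=1}^{n}(1-\varepsilon/i)^{-1}$ and $\prod_{i=1}^{n}(1-\varepsilon/i)$ with the complete homogeneous and elementary symmetric functions of $1,1/2,\dots,1/n$ (that is, with $\zeta_n^\star(\{1\}_k)$ and $(-1)^k\zeta_n(\{1\}_k)$, respectively) are all sound, and your remark on the radius of convergence disposes of the analytic issues.
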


The authors of \cite{WX2021,WLX2022} established identities for Ap\'ery-like series by considering the contour integral
\begin{align*}
\oint_{(\infty)} G(s) ds:=\lim_{R\rightarrow \infty}\oint_{C_R} \xi(s) \frac{\Gamma^2(1+s)}{\Gamma(1+2s)}\frac{4^s}{s^q}\quad (q\geq 2)
\end{align*}
and applying the three lemmas above, where $C_R$ denote a circular contour with radius $R$. The \cite{WX2021,WLX2022} the function $\xi(s)$ is primarily chosen from the following types: $1,\ (\psi(-s)+\gamma)^p\ (p\in\N),\ \psi^{(j)}(-s)\ (j\in\N),\ \pi\tan(\pi s),\ \pi\sec(\pi s)$ etc. In this paper, we will choose the aforementioned function $\xi(s)$ to be the \emph{extended trigonometric function} $\Phi(s-a;x)\ (a\in \mathbb{C}\setminus\Z)$, which was recently defined in \cite{Rui-Xu2025} as follows:
\begin{align}\label{defn-Etrigfun}
\Phi(s;x):=\phi(s;x)-\phi\Big(-s;x^{-1}\Big)-\frac1{s}\quad (s\in \mathbb{C}\setminus\Z),
\end{align}
where $\phi(s;x)$ denotes the \emph{generalized digamma function} defined by
\begin{align}
\phi(s;x):=\sum_{k=0}^\infty \frac{x^k}{k+s}\quad (s\notin\N_0^-:=\{0,-1,-2,-3,\ldots\}),
\end{align}
and $x$ is an arbitrary complex number with $|x|\leq 1$ and $x\neq 1$. To ensure convergence, $x$ in \eqref{defn-Etrigfun} must be restricted to roots of unity. Clearly, all integers are poles of $\Phi(s;x)$, and at these poles, it admits the following Laurent expansion:
\begin{lem}(\cite[Eq. (2.6)]{Rui-Xu2025})\label{lem-rui-xu-two} Let $x$ be root of unity. For $n\in \Z$,
\begin{align}\label{LEPhi-function}
\Phi(s;x)=x^{-n} \left(\frac1{s-n}+\sum_{m=0}^\infty \Big((-1)^m\Li_{m+1}(x)-\Li_{m+1}\Big(x^{-1}\Big)\Big)(s-n)^m \right).
\end{align}
\end{lem}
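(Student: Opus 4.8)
The plan is to reduce the statement at an arbitrary integer $n$ to the single case $n=0$ by exploiting a quasi-periodicity of $\Phi$, and then to compute the Laurent expansion at the origin directly from the defining series. The whole argument rests on one functional equation, which makes the appearance of the common factor $x^{-n}$ in \eqref{LEPhi-function} transparent.

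First I would record the elementary recurrence for the generalized digamma function. Reindexing $\phi(s+1;x)=\sum_{k\geq 0}x^k/(k+s+1)$ by $j=k+1$ gives $x\phi(s+1;x)=\phi(s;x)-1/s$, and applying the same identity with $x$ replaced by $x^{-1}$ at the argument $-s-1$ yields $\phi(-s-1;x^{-1})=x^{-1}\phi(-s;x^{-1})-1/(s+1)$. Substituting both into the definition $\Phi(s+1;x)=\phi(s+1;x)-\phi(-s-1;x^{-1})-1/(s+1)$, the two copies of $1/(s+1)$ cancel and the remaining terms collapse to
\[
\Phi(s+1;x)=x^{-1}\Phi(s;x).
\]
Iterating this equation (forward for $n\geq 0$, and through its inverse $\Phi(s-1;x)=x\Phi(s;x)$ for $n<0$) gives $\Phi(s+n;x)=x^{-n}\Phi(s;x)$ for every $n\in\Z$. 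Writing $s=n+\sigma$, the expansion of $\Phi$ at $s=n$ is exactly $x^{-n}$ times the expansion of $\Phi(\sigma;x)$ at $\sigma=0$, so it suffices to treat $n=0$.

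Next I would expand $\Phi(\sigma;x)$ near $\sigma=0$. Isolating the $k=0$ summands of the two digamma series, namely $1/\sigma$ from $\phi(\sigma;x)$ and a further $+1/\sigma$ from $-\phi(-\sigma;x^{-1})$, together with the explicit $-1/\sigma$, leaves the single simple pole $1/\sigma$ and the regular tails $\sum_{k\geq 1}x^k/(k+\sigma)-\sum_{k\geq 1}x^{-k}/(k-\sigma)$. Each tail is holomorphic near $\sigma=0$, and computing its Taylor coefficients by term-by-term differentiation gives $\frac{d^m}{d\sigma^m}\big|_{0}\,x^k/(k+\sigma)=(-1)^m m!\,x^k/k^{m+1}$ and $\frac{d^m}{d\sigma^m}\big|_{0}\,x^{-k}/(k-\sigma)=m!\,x^{-k}/k^{m+1}$. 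Summing over $k$ identifies the coefficient of $\sigma^m$ as $(-1)^m\Li_{m+1}(x)-\Li_{m+1}(x^{-1})$, which is precisely \eqref{LEPhi-function} with $n=0$; combined with the previous paragraph this proves the lemma.

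The main point requiring care is convergence, since for a root of unity $x\neq 1$ the series defining $\phi$ and the level-one polylogarithms $\Li_1(x),\Li_1(x^{-1})$ are only conditionally convergent. I would justify the reindexing in the recurrence, the term-by-term differentiation of the tails, and the identification of the $m=0$ coefficients by appealing to Dirichlet's test: the partial sums of $x^k$ are bounded while $1/(k+\sigma)^{m+1}$ decreases monotonically to zero, so the relevant series and all of their formal derivatives converge uniformly on compact subsets of a punctured disc about each integer, legitimizing every term-by-term operation. The contributions with $m\geq 1$ are absolutely convergent and present no difficulty, so the delicate case is confined to the $\Li_1$ level.
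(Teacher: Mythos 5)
Your proof is correct. Note that the paper itself offers no argument for this lemma --- it is imported verbatim from \cite[Eq.\ (2.6)]{Rui-Xu2025} --- so any complete derivation is already a contribution beyond what the text supplies. Your route differs from the obvious direct one (expanding $\phi(s;x)$ and $-\phi(-s;x^{-1})$ term by term around each integer $n$ separately, which forces one to split the series at $k=n$ and reassemble shifted tails into $x^{-n}\Li_{m+1}$): the quasi-periodicity $\Phi(s+1;x)=x^{-1}\Phi(s;x)$, obtained from the elementary shift $x\phi(s+1;x)=\phi(s;x)-1/s$ and its counterpart for $\phi(-s-1;x^{-1})$, reduces everything to the single expansion at $\sigma=0$, where the three $1/\sigma$ contributions combine to leave exactly one simple pole and two holomorphic tails whose Taylor coefficients are visibly $(-1)^m\Li_{m+1}(x)$ and $-\Li_{m+1}(x^{-1})$. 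This is cleaner and makes the prefactor $x^{-n}$ conceptual rather than computational. Your convergence discussion is also the right one: only the $m=0$ (i.e.\ $\Li_1$) level is conditionally convergent, and Abel/Dirichlet summation gives uniform convergence of the tails on compacta of $|\sigma|<1$, which legitimizes termwise differentiation via Weierstrass. The one point you leave untouched is the degenerate value $x=1$, which the lemma's hypothesis formally allows but for which $\phi(s;1)$ and $\Li_1(1)$ diverge individually (only the combination $\phi(s;1)-\phi(-s;1)$, i.e.\ $\pi\cot(\pi s)$ up to normalization, makes sense); since the paper is equally silent on this and only ever uses the lemma through convergent combinations, this is a defect of the source's formulation rather than a gap in your argument, though a sentence excluding or separately treating $x=1$ would make the proof airtight.
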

In the next section, we will prove Theorem \ref{thm-CPAS} by considering the contour integral
\begin{align}\label{defn-ContourIntegral}
\oint_{(\infty)} F(s) ds:=\oint_{(\infty)} \Phi(s-a;x) \frac{\Gamma^2(1+s)}{\Gamma(1+2s)}\frac{4^s}{s^q}\quad (a\in\mathbb{C}\setminus\Z,\ q\geq 1).
\end{align}
 For the residue calculations, we also need to provide the following power series expansion of the $\Phi(s-a;x)$.
\begin{lem}\label{lem-extend-xuzhou-two} Let $x$ be root of unity. For $n\in\N_0$ and $a\in \mathbb{C}\setminus \Z$, if $|s+n|<1$, then
\begin{align}\label{equ-extend-xuzhou-two}
\Phi(s-a;x)=x^n\sum_{m=0}^\infty \left((-1)^m \Li_{m+1}(x;1-a)-x \Li_{m+1}\Big(x^{-1};a\Big) \right)(s+n)^m.
\end{align}
\end{lem}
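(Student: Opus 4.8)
The plan is to expand $\Phi(s-a;x)$ directly from its series definition \eqref{defn-Etrigfun}. First I would insert the series for $\phi(\cdot;x)$ and absorb the term $-1/(s-a)$ into the first sum, where it cancels the $k=0$ contribution, to get
\[
\Phi(s-a;x)=\sum_{k=1}^\infty \frac{x^k}{k+s-a}-\sum_{k=0}^\infty \frac{x^{-k}}{k+a-s}.
\]
Because $a\notin\Z$, the point $t_0:=-n-a$ is not an integer, so $\Phi(s-a;x)$ is holomorphic near $s=-n$; this is precisely why the expansion is an ordinary Taylor series rather than a Laurent series. Writing $u=s+n$ and noting $k+s-a=(k+t_0)+u$ and $k+a-s=(k-t_0)-u$, I would expand each summand as a geometric series in $u$, valid for $|u|$ less than the distance from $t_0$ to the nearest integer, namely $\mathrm{dist}(a,\Z)$.

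Collecting the coefficient of $u^m$ then yields
\[
[u^m]\,\Phi(s-a;x)=(-1)^m\sum_{k=1}^\infty\frac{x^k}{(k-n-a)^{m+1}}-\sum_{k=0}^\infty\frac{x^{-k}}{(k+n+a)^{m+1}}.
\]
The key step is to reindex both series so as to factor out $x^n$ and recognise the cyclotomic Hurwitz zeta values. In the first sum the substitution $j=k-n$ gives $x^n\sum_{j\ge 1-n} x^j/(j-a)^{m+1}$, whose tail $j\ge 1$ contributes $x^n\Li_{m+1}(x;1-a)$; in the second sum the substitution $i=k+n+1$ (after extracting a factor $x$) gives $x^{n+1}\sum_{i\ge n+1} x^{-i}/(i+a-1)^{m+1}$, whose tail $i\ge 1$ contributes $x^{n+1}\Li_{m+1}(x^{-1};a)$. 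These match the two cyclotomic Hurwitz terms on the right-hand side of \eqref{equ-extend-xuzhou-two}.

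The main obstacle, and the heart of the argument, is to check that the two finite boundary sums left over from these reindexings cancel identically. These are the range $1-n\le j\le 0$ in the first series and $1\le i\le n$ in the second. Setting $\ell=-j$ in the former and $\ell=i-1$ in the latter, both reduce to $\pm\, x^n\sum_{\ell=0}^{n-1}x^{-\ell}/(\ell+a)^{m+1}$ with opposite signs, the sign flip in the first one arising from $(j-a)^{m+1}=(-1)^{m+1}(\ell+a)^{m+1}$ combined with the prefactor $(-1)^m$. Hence they annihilate one another for every $m$, and the coefficient of $u^m$ equals $x^n\big((-1)^m\Li_{m+1}(x;1-a)-x\Li_{m+1}(x^{-1};a)\big)$, which is exactly \eqref{equ-extend-xuzhou-two}; the cases $n\ge 1$ and $n=0$ (empty boundary sums) are handled uniformly. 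I would close by noting that, although the statement records the region $|s+n|<1$, the series actually converges for $|s+n|<\mathrm{dist}(a,\Z)$, and that for the residue computations of Section \ref{sectwo} only the Taylor coefficients just obtained are needed.
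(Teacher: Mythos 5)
Your computation is correct and is precisely the direct-from-the-definition argument that the paper declares ``straightforward'' and omits: after cancelling the $k=0$ term of $\phi(s-a;x)$ against $-1/(s-a)$ and expanding geometrically around $s=-n$, the only nontrivial point is the cancellation of the two finite boundary sums produced by the reindexings $j=k-n$ and $i=k+n+1$, and your sign bookkeeping there (the factor $(-1)^{m+1}$ from $(j-a)^{m+1}=(-1)^{m+1}(\ell+a)^{m+1}$ against the prefactor $(-1)^m$) checks out, as does the degenerate case $n=0$. The only detail left implicit is the justification for interchanging the sums over $k$ and $m$ --- the double series is only conditionally convergent in $k$ when $x$ is a root of unity --- but this is routine and does not affect the validity of the argument.
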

\begin{proof}
The proof of \eqref{equ-extend-xuzhou-two} is straightforward from the definition of $\Phi(s;x)$ and is therefore omitted.
\end{proof}

\section{Proof of Theorem \ref{thm-CPAS} via Contour Integration}

In this section, we proceed to prove Theorem \ref{thm-CPAS} by applying the results established in Section \ref{sectwo}, and subsequently present a more general theorem along with illustrative examples.

First, we show that the contour integral in \eqref{defn-ContourIntegral} is equal to zero.
With the help of the \emph{Legendre duplication formula}
\[
\Gamma(s)\Gamma\left(s+\frac{1}{2}\right)=\sqrt{\pi}\cdot 2^{1-2s}\Gamma(2s)
\]
and the asymptotic expansion for the ratio of two gamma functions
\[
\frac{\Gamma(s+a)}{\Gamma(s+b)}
    =s^{a-b}\left(1+O\Big(\frac{1}{s}\Big)\right)\,,
    \quad\text{for } |\arg(s)|\leq\pi-\varepsilon\,,\ \varepsilon>0\,,\ |s|\to\infty
\]
(see \cite[Sections 2.3 and 2.11]{Luke69.1}), we obtain
\[
\Gamma(s)\Gamma\left(s+\frac{1}{2}\right)=\frac{\sqrt{\pi}}{s^{q-1/2}}\left(1+O\left(\frac{1}{s}\right)\right)\,,\quad |s|\to\infty.
\]
Since the $\Phi(s-a;x)$-function is bounded on the complex plane except at its poles and $\lim_{|s|\rightarrow \infty} |s|^{1-\varepsilon}\Phi(s-a;x)=0\ (\forall \varepsilon>0) $, it follows that for $q\geq 1$
\begin{align*}
\oint_{(\infty)} F(s) ds=\oint_{(\infty)} \Phi(s-a;x) \frac{\Gamma^2(1+s)}{\Gamma(1+2s)}\frac{4^s}{s^q}=0.
\end{align*}
It is easy to see that the integrand has the following poles in the complex plane: $s=0$ is a pole of order $q$, while $s=n+a\ (n\in\Z)$ and $s=-n\ (n\in\N)$ are all simple poles. First, we consider the residue of the integrand at $s=n+a\ (n\in\Z)$. According to Lemma \ref{lem-rui-xu-two}, for $|s+n+a|<1$, we have
\begin{align*}
\Phi(s-a;x)=x^{-n} \left(\frac1{s-a-n}+\sum_{m=0}^\infty \Big((-1)^m\Li_{m+1}(x)-\Li_{m+1}\Big(x^{-1}\Big)\Big)(s-a-n)^m \right).
\end{align*}
Therefore, the residue at this point can be computed as
\begin{align*}
\Res[F(s),n+a]&=\lim_{s\rightarrow n+a} (s-n-a)\Phi(s-a;x) \frac{\Gamma^2(1+s)}{\Gamma(1+2s)}\frac{4^s}{s^q}\\
&=x^{-n} \frac{4^{n+a}}{(n+a)^q\binom{2n+2a}{n+a}}.
\end{align*}
For a negative integer $-n$, by Lemma \ref{Lem.AB}, if $s\to -n$, we have
\[
F(s)=-\Phi(s-a;x)\frac{4^s}{s^q}\frac{(2n-1)!}{(n-1)!^2}
    \sum_{k_1,k_2,k_3\geq0}2^{k_3+1}A_{k_1}(n-1)A_{k_2}(n-1)B_{k_3}(2n-1)
    (s+n)^{k_1+k_2+k_3-1}\,.
\]
Applying Lemma \ref{lem-extend-xuzhou-two}, the residue of the integrand at the simple pole $s=-n\ (n\in\N)$ can be computed as:
\begin{align*}
\Res[F(s),-n]=(-1)^{q+1}\frac{4^{-n}}{n^{q-1}}\binom{2n}{n}x^n\left(\Li_1(x;1-a)-x\Li_1(x^{-1};a)\right).
\end{align*}
Similarly, if $s\to 0$, applying Lemmas \ref{Lem.CD} and \ref{lem-extend-xuzhou-two}, we have
\begin{align*}
F(s)&=\frac{4^s}{s^q}\sum_{m=0}^\infty \left((-1)^m \Li_{m+1}(x;1-a)-x \Li_{m+1}\Big(x^{-1};a\Big) \right)s^m \sum_{k_1,k_2,k_3\geq0}
    \frac{2^{k_3}C_{k_1}C_{k_2}D_{k_3}}{k_1!k_2!k_3!}s^{k_1+k_2+k_3}\\
    &=4^s\sum_{k_1,k_2,k_3,k_4=0}^\infty \frac{2^{k_3}C_{k_1}C_{k_2}D_{k_3}}{k_1!k_2!k_3!}\left((-1)^{k_4} \Li_{k_4+1}(x;1-a)-x \Li_{k_4+1}\Big(x^{-1};a\Big) \right)s^{k_1+k_2+k_3+k_4-q},
\end{align*}
and the residue of the pole of order $q$ at 0 is
\begin{align*}
\Res[F(s),0]&=\frac1{(q-1)!}\lim_{s\rightarrow 0}\frac{d^{q-1}}{ds^{q-1}}\left(s^q\Phi(s-a;x) \frac{\Gamma^2(1+s)}{\Gamma(1+2s)}\frac{4^s}{s^q}\right)\\
&=\sum_{k_1+\cdots+k_5=q-1,\atop \forall k_j\geq 0} \frac{C_{k_1}C_{k_2}D_{k_3}2^{k_3+k_4}}{k_1!k_2!k_3!k_4!}\log^{k_4}(2)\left((-1)^{k_5}\Li_{k_5+1}(x;1-a)-x\Li_{k_5+1}(x^{-1};a)\right).
\end{align*}
By Lemma \ref{Lem.Res}, we have
\begin{align*}
\sum_{n=-\infty}^\infty \Res[F(s),n+a]+\sum_{n=1}^\infty \Res[F(s),-n]+\Res[F(s),0]=0.
\end{align*}
Summing these three contributions gives the statement of the Theorem \ref{thm-CPAS}. \hfill $\square$

It is possible to apply the method presented in this paper to investigate other cyclotomic Ap\'ery-like series. For example, by considering the contour integral
\begin{align}\label{contour-integ-general}
\oint_{(\infty)} \Phi(s-a;x) \frac{\Gamma^2(1+s)}{\Gamma(1+2s)}\frac{4^s}{(s+b-a)^q}ds\quad (a\in\mathbb{C}\setminus\Z,\ b-a\notin\N_0,\ q\geq 2)
\end{align}
one can establish identities involving the Ap\'ery-like series
\begin{align*}
\sum_{n=-\infty}^\infty \frac{4^{n+a}}{(n+b)^q\binom{2n+2a}{n+a}}x^{-n}.
\end{align*}
Specifically, we can present the following general result:
\begin{thm}\label{thm-general-CAS} Let $x$ be a root of unity. For $a,b\in \mathbb{C}\setminus \Z$ and $q\in \N$ with $b-a,2(b-a)\notin \N$ and $(q,x)\neq (1,1)$, we have
\begin{align}\label{equ-general-thm-CPAS}
&\sum_{n=-\infty}^\infty \frac{4^{n+a}}{(n+b)^q\binom{2n+2a}{n+a}}x^{-n}-(-1)^q\left(\Li_1(x;1-a)-x\Li_1(x^{-1};a)\right) \sum_{n=1}^\infty \frac{n\binom{2n}{n}}{(n+a-b)^{q}4^n}x^n\nonumber\\
&=\sum_{k_1+\cdots+k_5=q-1,\atop \forall k_j\geq 0} \frac{C_{k_1}(a-b)C_{k_2}(a-b)D_{k_3}(2a-2b)2^{k_3+k_4}}{k_1!k_2!k_3!k_4!}\log^{k_4}(2)\nonumber\\&\qquad\qquad\qquad\times\left(x\Li_{k_5+1}(x^{-1};b)-(-1)^{k_5}\Li_{k_5+1}(x;1-b)\right).
\end{align}
Here $C_n(x)$ and $D_n(x)$ are expressed in terms of the Gamma function and exponential complete Bell polynomials involving the digamma function and its higher derivatives, defined as follows:
\begin{align}
&C_n(x):=\Gamma(x+1)Y_n\Big(\psi(x+1)+\gamma,\psi^{(1)}(x+1),\psi^{(2)}(x+1),\ldots,\psi^{(n-1)}(x+1)\Big),\label{defn-general-C-squ}\\
&D_n(x):=\frac{Y_n\Big(-\psi(x+1)-\gamma,-\psi^{(1)}(x+1),\psi^{(2)}(x+1),\ldots,-\psi^{(n-1)}(x+1)\Big)}{\Gamma(x+1)},\label{defn-general-D-squ}
\end{align}
where the \emph{digamma function} $\psi(z)$ is defined as
\begin{align}
\psi(z):=\frac{d}{ds}\log\Gamma(z)=-\gamma-\frac1{z}+\sum_{n=1}^\infty \left(\frac1{n}-\frac1{n+z}\right)\quad (z\in \mathbb{C}\setminus \N_0^-).
\end{align}
\end{thm}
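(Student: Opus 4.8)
The plan is to mirror the proof of Theorem~\ref{thm-CPAS} step by step, with the kernel denominator $s^q$ replaced by $(s+b-a)^q$; concretely, I would evaluate the contour integral \eqref{contour-integ-general} by summing residues. First, the integral vanishes: by the Legendre duplication formula and the ratio asymptotic $\Gamma(s)/\Gamma(s+\tfrac12)=s^{-1/2}(1+O(1/s))$ one gets, away from the poles, $\frac{\Gamma^2(1+s)}{\Gamma(1+2s)}4^s=\sqrt{\pi}\,s^{1/2}(1+O(1/s))$, and since $\Phi(s-a;x)$ is bounded off its poles with $\lim_{|s|\to\infty}|s|^{1-\varepsilon}\Phi(s-a;x)=0$, the integrand on suitable circles $|s|=\rho_k\to\infty$ is $o(|s|^{-q-1/2+\varepsilon})=o(|s|^{-1-\delta})$ for $q\geq1$, so $\oint_{(\infty)}F(s)\,ds=0$ exactly as for \eqref{defn-ContourIntegral}. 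The integrand has three families of poles: the simple poles $s=n+a$ $(n\in\Z)$ of $\Phi(s-a;x)$, the simple poles $s=-n$ $(n\in\N)$ of the gamma quotient, and a pole of order $q$ at $s=a-b$ from $(s+b-a)^q$. The hypotheses $b-a,\,2(b-a)\notin\N$ are exactly what keep $s=a-b$ away from the points $-n$ (which would require $b-a=n\in\N$) and from the negative half-odd-integers where the gamma quotient vanishes (which would require $2(b-a)\in\N$), so the order-$q$ pole is genuine and isolated; the same hypotheses make $\Gamma(a-b+1)$ and the derivatives $\psi^{(k)}(a-b+1),\psi^{(k)}(2a-2b+1)$ finite, so that $C_n(a-b)$ and $D_n(2a-2b)$ are well defined.

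The two ``outer'' residue families transcribe directly from Theorem~\ref{thm-CPAS}. At $s=n+a$, Lemma~\ref{lem-rui-xu-two} gives residue $x^{-n}$ for $\Phi(s-a;x)$, whence $\Res[F,n+a]=x^{-n}\frac{4^{n+a}}{(n+b)^q}\frac{\Gamma^2(1+n+a)}{\Gamma(1+2n+2a)}=x^{-n}\frac{4^{n+a}}{(n+b)^q\binom{2n+2a}{n+a}}$, and summation over $n\in\Z$ produces the first series in \eqref{equ-general-thm-CPAS}. At $s=-n$ the gamma quotient has a simple pole with residue $-n\binom{2n}{n}$ (read off from $\Gamma(1+s)\sim(-1)^{n-1}/((n-1)!\,(s+n))$ and $\Gamma(1+2s)\sim-1/(2\,(2n-1)!\,(s+n))$, or from the Laurent data in Lemma~\ref{Lem.AB}), while the remaining factor is analytic there; evaluating it through the $m=0$ term of Lemma~\ref{lem-extend-xuzhou-two} and using $(b-a-n)^q=(-1)^q(n+a-b)^q$ gives $\Res[F,-n]=(-1)^{q+1}\frac{n\binom{2n}{n}}{(n+a-b)^q4^n}x^n\bigl(\Li_1(x;1-a)-x\Li_1(x^{-1};a)\bigr)$. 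Summing over $n\in\N$ yields precisely $-(-1)^q\bigl(\Li_1(x;1-a)-x\Li_1(x^{-1};a)\bigr)$ times the second series in \eqref{equ-general-thm-CPAS}.

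The substantive step is the residue at the order-$q$ pole $s=a-b$. Setting $s=a-b+u$, the kernel becomes $u^{-q}$ times $g(u):=\Phi(-b+u;x)\,\frac{\Gamma^2(a-b+1+u)}{\Gamma(2a-2b+1+2u)}\,4^{a-b}4^{u}$, so $\Res[F,a-b]$ is the coefficient of $u^{q-1}$ in the power series $g(u)$. I would first record a one-line generalization of Lemma~\ref{Lem.CD}: from $\log\Gamma(x+1+u)=\log\Gamma(x+1)+\sum_{k\geq1}\frac{\psi^{(k-1)}(x+1)}{k!}u^k$ and the Bell generating function \eqref{cBell.gf}, one obtains $\Gamma(x+1+u)e^{\gamma u}=\sum_{n\geq0}C_n(x)\frac{u^n}{n!}$ and $\{\Gamma(x+1+u)e^{\gamma u}\}^{-1}=\sum_{n\geq0}D_n(x)\frac{u^n}{n!}$ with $C_n(x),D_n(x)$ as in \eqref{defn-general-C-squ}--\eqref{defn-general-D-squ}. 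Then
\[
\frac{\Gamma^2(a-b+1+u)}{\Gamma(2a-2b+1+2u)}=\Bigl[\Gamma(a-b+1+u)e^{\gamma u}\Bigr]^2\Bigl[\Gamma(2a-2b+1+2u)e^{2\gamma u}\Bigr]^{-1},
\]
so the $e^{\pm2\gamma u}$ cancel and this equals the Cauchy product $\bigl(\sum_{k_1}C_{k_1}(a-b)\tfrac{u^{k_1}}{k_1!}\bigr)\bigl(\sum_{k_2}C_{k_2}(a-b)\tfrac{u^{k_2}}{k_2!}\bigr)\bigl(\sum_{k_3}D_{k_3}(2a-2b)2^{k_3}\tfrac{u^{k_3}}{k_3!}\bigr)$. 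The factor $4^{u}$ contributes $\sum_{k_4}2^{k_4}\log^{k_4}(2)\tfrac{u^{k_4}}{k_4!}$ (the powers of $\log2$), while the same expansion underlying Lemma~\ref{lem-extend-xuzhou-two}, carried out at the non-integer point $s-a=-b$, gives $\Phi(-b+u;x)=\sum_{m\geq0}\bigl((-1)^m\Li_{m+1}(x;1-b)-x\Li_{m+1}(x^{-1};b)\bigr)u^m$, supplying the $k_5$-index. Extracting the $u^{q-1}$ coefficient collects all $k_1+\cdots+k_5=q-1$ with $k_j\geq0$, giving $\Res[F,a-b]$ as the corresponding sum with bracket $(-1)^{k_5}\Li_{k_5+1}(x;1-b)-x\Li_{k_5+1}(x^{-1};b)$. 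Finally, Lemma~\ref{Lem.Res} gives $\sum_{n\in\Z}\Res[F,n+a]+\sum_{n\in\N}\Res[F,-n]+\Res[F,a-b]=0$; moving the first two sums left and $-\Res[F,a-b]$ right (which flips the bracket to $x\Li_{k_5+1}(x^{-1};b)-(-1)^{k_5}\Li_{k_5+1}(x;1-b)$) yields \eqref{equ-general-thm-CPAS}.

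The main obstacle is entirely the bookkeeping of this order-$q$ residue: establishing the generalized generating functions for $C_n(x),D_n(x)$ and, above all, tracking the exact cancellation of the Euler--Mascheroni factors $e^{\pm2\gamma u}$ so that the surviving weight is precisely $C_{k_1}(a-b)C_{k_2}(a-b)D_{k_3}(2a-2b)$ rather than some $\gamma$-contaminated variant. A secondary point, visible already from the reduction to Theorem~\ref{thm-CPAS} at $b=a$, is that $4^{s}$ evaluated at the displaced pole also carries the constant $4^{a-b}$, which I would record as an overall prefactor on the right-hand side of \eqref{equ-general-thm-CPAS}; apart from this, the computation is a faithful transcription of the $s=0$ residue in the proof of Theorem~\ref{thm-CPAS}.
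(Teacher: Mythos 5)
Your proposal is correct and follows exactly the route the paper itself takes: the paper's proof of this theorem is only a sketch (``entirely analogous to Theorem \ref{thm-CPAS}'', via the contour integral \eqref{contour-integ-general} together with the generalized expansions $\Gamma(x+1+u)e^{\gamma u}=\sum_{n\geq 0}C_n(x)u^n/n!$ and its reciprocal), and your residue computations at $s=n+a$, $s=-n$, and $s=a-b$ supply precisely the details the paper leaves to the reader, including the correct cancellation of the $e^{\pm 2\gamma u}$ factors. Your side remark is also right: the order-$q$ residue at $s=a-b$ genuinely carries the constant $4^{a-b}$, so the right-hand side of \eqref{equ-general-thm-CPAS} as printed is missing an overall factor $4^{a-b}$ (a discrepancy invisible both at $b=a$ and in the paper's example $b\to a+1/2$, $q=1$, where the right-hand side vanishes because $D_0(-1)=0$); this is a defect of the stated formula, not of your argument.
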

\begin{proof}
The proof of this theorem is entirely analogous to that of Theorem \ref{thm-CPAS} above. The key lies in considering the residue calculation of the contour integral \eqref{contour-integ-general}, and this residue computation requires the generalization of Lemma \ref{Lem.CD} into the following form:
\[
\Gamma(s+1){\rm e}^{\gamma s}=\sum_{n=0}^\infty C_n(a)\frac{(s-a)^n}{n!}\quad\text{and}\quad
\{\Gamma(s+1){\rm e}^{\gamma s}\}^{-1}=\sum_{n=0}^\infty D_n(a)\frac{(s-a)^n}{n!}.
\]
The above two power series expansions can be directly obtained using \emph{Fa\`a di Bruno's formula} (see \cite[Sec. 3.4]{C1974}).

Let $H(s)$ be defined as the integrand in the integral in \eqref{contour-integ-general}. It is not difficult to observe that the function \( H(s) \) has the following poles in the complex plane: $a - b $ is a pole of order $q$, $n + a$ (where $n \in \mathbb{Z}$) are simple poles, and $-n$ (where $n \in \mathbb{N}$) are also simple poles.

From Lemma \ref{lem-extend-xuzhou-two} and the two expansions above, we can provide the Laurent series expansion of $H(s)$ at $s = a - b$ as follows:
\begin{align*}
H(s)&=4^s\sum_{k_1,k_2,k_3,k_4=0}^\infty \frac{2^{k_3}C_{k_1}(a-b)C_{k_2}(a-b)D_{k_3}(2a-2b)}{k_1!k_2!k_3!}\nonumber\\
 &\qquad\qquad\times \left((-1)^{k_4}\Li_{k_4+1}(x;1-b)-x \Li_{k_4+1}\Big(x^{-1};b\Big) \right)(s+b-a)^{k_1+k_2+k_3+k_4-q}.
\end{align*}
Therefore, the residue at $s = a - b$ can be obtained through direct computation as follows:
\begin{align*}
\Res[H(s),a-b]&=\frac1{(q-1)!}\lim_{s\rightarrow 0}\frac{d^{q-1}}{ds^{q-1}}\left((s+b-a)^q\Phi(s-a;x) \frac{\Gamma^2(1+s)}{\Gamma(1+2s)}\frac{4^s}{(s+b-a)^q}\right)\\
&=\sum_{k_1+\cdots+k_5=q-1,\atop \forall k_j\geq 0} \frac{C_{k_1}(a-b)C_{k_2}(a-b)D_{k_3}(2a-2b)2^{k_3+k_4}}{k_1!k_2!k_3!k_4!}\log^{k_4}(2)\\
&\qquad\qquad\times\left((-1)^{k_5}\Li_{k_5+1}(x;1-b)-x\Li_{k_5+1}(x^{-1};b)\right).
\end{align*}
Additionally, the residues at the simple poles $n + a$ ($n \in \mathbb{Z}$) and $-n$ ($n \in \mathbb{N}$) can be directly calculated as follows:
\begin{align*}
\Res[H(s),n+a]&=\lim_{s\rightarrow n+a} (s-n-a)\Phi(s-a;x) \frac{\Gamma^2(1+s)}{\Gamma(1+2s)}\frac{4^s}{(s+b-a)^q}\\
&=x^{-n} \frac{4^{n+a}}{(n+b)^q\binom{2n+2a}{n+a}}
\end{align*}
and
\begin{align*}
\Res[H(s),-n]=(-1)^{q+1}\frac{n4^{-n}}{(n+a-b)^{q}}\binom{2n}{n}x^n\left(\Li_1(x;1-a)-x\Li_1(x^{-1};a)\right).
\end{align*}
Applying Lemma \ref{Lem.Res} yields
\begin{align*}
\sum_{n=-\infty}^\infty \Res[H(s),n+a]+\sum_{n=1}^\infty \Res[H(s),-n]+\Res[H(s),a-b]=0.
\end{align*}
Summing these three contributions gives the statement of the Theorem \ref{thm-CPAS}.
\end{proof}

Obviously, letting $b=a$ in Theorem \ref{thm-general-CAS} yields Theorem \ref{thm-CPAS}. From Theorem \ref{thm-general-CAS}, many other conclusions can also be derived. For example, taking $q=1$ and $b\rightarrow a+1/2$, and noting the fact that $\lim_{x\rightarrow -1}D_0(x)=0$, we obtain
\begin{align}
\sum_{n=-\infty}^\infty \frac{4^{n+a}x^{-n}}{(n+a+1/2)\binom{2n+2a}{n+a}}=\frac{x \Li_1(x^{-1};a)-\Li_1(x;1-a)}{\pi} \frac{\Li_1(x;1/2)-x\Li_1(x^{-1};1/2)}{\sqrt{1-x^{-1}}}.
\end{align}
Finally, we conclude this section by proposing the following two questions:
\begin{qu}
In the cyclotomic Ap\'ery-like series given by \eqref{equ-cor-CPAS-casea}, $x$ must be a root of unity. For the more general cyclotomic Ap\'ery-like series
\[\sum_{n=1}^\infty \frac{n}{(n-1/2)^q}\frac{\binom{2n}{n}}{4^n}x^n\quad (|x|\leq 1),\]
is it possible to derive similar formulas? In other words, can such series be expressed as rational linear combinations of Riemann zeta values, $\log(2)$, and multiple polylogarithms?
\end{qu}

\begin{qu} Does the second parametric Ap\'ery-like series
\[\sum_{n=0}^\infty \frac{\binom{2n}{n}x^n}{(n+a)^q 4^n}\quad (a\in \mathbb{C}\setminus \N_0^-)\]
appearing in Theorem \ref{thm-general-CAS} admit a closed-form expression? For instance, can it be expressed in terms of functions such as the digamma function, Gamma function, multiple Hurwitz zeta functions, etc.? Indeed, by employing the method of contour integration or the integral representation of the Beta function, the result for $x=1$ can be derived as follows:
\begin{align}
\sum_{n=0}^\infty \frac{\binom{2n}{n}}{(n+a) 4^n}=\frac{\Gamma^2(a)4^a}{2\Gamma(2a)}.
\end{align}
By successively differentiating the above expression with respect to $a$, a general formula for $x=1$ and $q\in \N$ can be derived. However, does a similar result hold for the case $x\neq 1$?
\end{qu}

\section{Alternative Expression for a Class of Euler-Ap\'ery-like Series}\label{secfour}

In this section, we employ a method analogous to that in \cite[Theorem 3.8]{WLX2022} to derive an alternative expression of the theorem itself. We begin by presenting an introduction to Fuss-Catalan numbers and the relevant properties of their generating function.

The \emph{Fuss-Catalan numbers} (\cite{Av2008, SR2005, HP2001}) are defined as
\begin{align*}
F_m(n):=\frac{1}{(m-1)n+1}\binom{mn}{n} \quad (m\geq 1,n\geq 0)
\end{align*}
and satisfy the generating function relation \cite[(2.9)]{HP2001}
\begin{align}\label{relation-gener-fuss}
G_m(x)=1+xG_m(x)^m\quad\text{and}\quad G_m(0):=1,
\end{align}
where
\begin{align*}
G_m(x):=\sum_{n=0}^\infty F_m(n)x^n\quad \left(|x|\leq \frac{(m-1)^{m-1}}{m^m}\right)
\end{align*}
and it is defined that $0^0:=1$, with the condition that $x\in(-1,1)$ if $m=1$. Clearly, $G_1(x)=(1-x)^{-1}\ (x\in(-1,1))$.

In \cite[Section 2.2]{WLX2022}, the authors established that the function $G_m(x)$ is monotonically increasing on the interval $[-(m-1)^{m-1}/{m^m},(m-1)^{m-1}/{m^m}]$ and satisfies $0<G_m(x)\leq m/(m-1)\ (m\geq 2)$. In fact, from the generating function relation \eqref{relation-gener-fuss}, it is straightforward to observe that $G_m>1/2$.  This follows from a contradiction argument: if we assume $0<G\leq 1/2$ for $x\in [-(m-1)^{m-1}/{m^m},0)$, the identity \eqref{relation-gener-fuss} $G_m(x)=1+xG_m(x)^m$ would imply $G_m(x)>1/2$, which contradicts the assumption. Therefore, for any integer $m>1$, the function $G_m(x)$ takes values in the interval $(1/2,2]$.

The authors of \cite{WLX2022} utilized the generating function relation \eqref{relation-gener-fuss} to prove the following Euler-Ap\'ery-like series identity:
\begin{thm}(\cite[Theorem 3.8]{WLX2022})\label{thm-Bfc} For positive integers $m$ and $p$,
\begin{align}\label{FSN3}
\sum_{n=1}^\infty \binom{mn}{n}\frac{x^n}{n^{p+1}}&=-m\sum_{k=0}^{p-1} \sum_{0\leq j+l\leq k\atop j,l\geq 0}  (-1)^{j+l+k} \frac{(j+1)(m-1)^j}{(p-1-k)!(k-j-l)!}\nonumber\\&\quad\quad\times \log^{p-1-k}(|x|)\log^{k-j-l}(|1-G_m(x)^{-1}|)\nonumber\\&\quad\quad\times \left\{(m-1)\Li_{l+1,\{1\}_{j+1}}(1-G_m(x)^{-1})-\Li_{l+2,\{1\}_{j}} (1-G_m(x)^{-1}) \right\},
\end{align}
where $G_m(x)^{-1}=1/G_m(x)$ and $|x|\leq(m-1)^{m-1}/m^m$.
\end{thm}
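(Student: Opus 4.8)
Write $B_p(x):=\sum_{n=1}^\infty \binom{mn}{n}\frac{x^n}{n^{p+1}}$, so that $xB_p'(x)=B_{p-1}(x)$ with $B_p(0)=0$; iterating this elementary relation gives the single integral representation
\[B_p(x)=\frac{1}{(p-1)!}\int_0^x \log^{p-1}\Big(\frac{x}{t}\Big)\,\frac{B_0(t)}{t}\,dt.\]
The first task is the base case $B_0(x)=\sum_{n\ge1}\binom{mn}{n}\frac{x^n}{n}$. Using $\binom{mn}{n}=\big((m-1)n+1\big)F_m(n)$ one gets $B_0=(m-1)(G_m-1)+\int_0^x\frac{G_m(t)-1}{t}\,dt$, and the functional equation \eqref{relation-gener-fuss} yields the clean identity $\frac{G_m-1}{t}=G_m^{m}$. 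I would then introduce the substitution $w=1-G_m(t)^{-1}$, equivalently $t=w(1-w)^{m-1}$ (again read off from \eqref{relation-gener-fuss}), under which $G_m=(1-w)^{-1}$ and $\frac{dt}{t}=\big(\frac1w-\frac{m-1}{1-w}\big)dw$; the integral $\int_0^x G_m^{m}\,dt$ becomes $\int_0^{y}\frac{1-mw}{(1-w)^2}\,dw$ with $y:=1-G_m(x)^{-1}$, which is elementary and collapses to $B_0(x)=-m\log(1-y)=m\Li_1(y)$.

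\textbf{Extracting the $\log|x|$ powers.} On the disc $|x|\le (m-1)^{m-1}/m^m$ one has $G_m\in(1/2,2]$, hence $1-w=G_m^{-1}>0$ and $y\in(-1,1/2]$, so every logarithm and polylogarithm below is real; this is exactly why the absolute values flagged in the Remark are needed. Since $t$ and $x$ share a sign along the path, $\log(x/t)=\log|x|-\log|t|$ with $\log|t|=\log|w|+(m-1)\log(1-w)$. I would substitute $t\mapsto w$ and binomially expand $\big(\log|x|-\log|t|\big)^{p-1}$, pulling the $\log|x|$ factors outside the integral. This produces precisely the prefactors $\frac{\log^{p-1-k}|x|}{(p-1-k)!}$ of the theorem and reduces everything to the auxiliary integrals
\[I_k:=\int_0^x \log^{k}|t|\,\frac{B_0(t)}{t}\,dt=-m\int_0^{y}\big[\log|w|+(m-1)\log(1-w)\big]^{k}\log(1-w)\Big(\frac1w-\frac{m-1}{1-w}\Big)dw.\]

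\textbf{Evaluating $I_k$.} Set $\omega_0=\frac{dw}{w}$ and $\omega_1=\frac{dw}{1-w}$, so that $\frac{dt}{t}=\omega_0-(m-1)\omega_1$. The structural observation driving the stated form is that the polylogarithmic building block factors as an iterated integral,
\[(m-1)\Li_{l+1,\{1\}_{j+1}}(y)-\Li_{l+2,\{1\}_{j}}(y)=\int_0^{y}\omega_0^{\,l}\big((m-1)\omega_1-\omega_0\big)\,\omega_1^{\,j+1},\]
whose middle one-form is exactly $-\frac{dt}{t}$. I would expand $[\log|w|+(m-1)\log(1-w)]^k$ by the multinomial theorem, rewrite $\log^{c}(1-w)=(-1)^{c}c!\,\Li_{\{1\}_{c}}(w)$ via \eqref{equ-p-1-Li}, and evaluate the resulting integrals against $\omega_0$ and $\omega_1$ by repeated integration by parts; the $\log|w|$ factors surviving to the upper limit generate the powers $\log^{k-j-l}|y|$, while the kernel $\omega_0-(m-1)\omega_1$ reassembles the block above. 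Collecting terms should give
\[\frac{I_k}{k!}=-m\sum_{j+l\le k}(-1)^{j+l}\frac{(j+1)(m-1)^j}{(k-j-l)!}\log^{k-j-l}|y|\Big[(m-1)\Li_{l+1,\{1\}_{j+1}}(y)-\Li_{l+2,\{1\}_{j}}(y)\Big],\]
which, combined with the $\log|x|$-prefactors from the previous step, is exactly \eqref{FSN3}.

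\textbf{Main obstacle and a cross-check.} The delicate part is the last step: tracking the multinomial expansion together with all integration-by-parts boundary terms so as to land on precisely the coefficient $(j+1)(m-1)^{j}/\big((p-1-k)!(k-j-l)!\big)$ and on the specific difference $(m-1)\Li_{l+1,\{1\}_{j+1}}-\Li_{l+2,\{1\}_{j}}$ rather than some other weight-$(j+l+2)$ combination; the multiplicity $(j+1)$ is the hardest factor to pin down. As an independent consistency check I would verify the equivalent unexpanded identity $B_p(x)=m\int_0^{y}\big(\omega_0-(m-1)\omega_1\big)^{p}\omega_1$, a convergent sum of honest multiple polylogarithms in $y$, and confirm that it agrees with \eqref{FSN3} through the shuffle relations (for instance, at $p=2$ the two presentations coincide exactly by $\Li_{1}(y)\Li_{2}(y)=\Li_{1,2}(y)+2\Li_{2,1}(y)$). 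This check also guards against the sign and absolute-value slips of the type corrected in the Remark.
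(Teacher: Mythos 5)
Your proposal is correct and follows essentially the same route the paper takes for this family of identities (the theorem itself is quoted from \cite[Theorem 3.8]{WLX2022}, but the paper's proof of the analogous Theorem \ref{thm-AFSN3} is the template): the kernel $\log^{p-1}(|x/t|)/t$ applied to the base case $\sum_n\binom{mn}{n}x^n/n=m\log G_m(x)$, binomial extraction of the $\log|x|$ powers, the substitution dictated by the functional equation \eqref{relation-gener-fuss}, and expansion of the $\log(1-w)$ powers into $\Li_{\{1\}_j}$ before term-by-term integration. Your only departures are cosmetic --- deriving the base case from the Fuss--Catalan decomposition rather than citing it, and phrasing the final bookkeeping in iterated-integral/shuffle language instead of the paper's series-with-$\zeta_{n-1}(\{1\}_j)$ computation --- and your compact cross-check $B_p(x)=m\int_0^{y}(\omega_0-(m-1)\omega_1)^{p}\omega_1$ is a nice addition.
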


Next, we employ a method analogous to the proof of \cite[Theorem 3.8]{WLX2022} to derive an alternative expression for \eqref{FSN3}.
\begin{thm}\label{thm-AFSN3} For positive integers $m>1$ and $p$,
\begin{align}\label{AFSN3}
\sum_{n=1}^\infty \binom{mn}{n}\frac{(-x)^n}{n^{p+1}}&=-\sum_{k=0}^{p-1} \sum_{0\leq j+l\leq k\atop j,l\geq 0}  (-1)^{l+k} \frac{(j+1)m^{j+1}}{(p-1-k)!(k-j-l)!}\nonumber\\&\quad\quad\times \log^{p-1-k}(|x|)\log^{k-j-l}(|1-G_m(-x)|)\nonumber\\&\quad\quad\times \left\{\Li_{l+2,\{1\}_{j}}(1-G_m(-x))+m \Li_{l+1,\{1\}_{j+1}} (1-G_m(-x)) \right\},
\end{align}
where $|x|\leq(m-1)^{m-1}/m^m$.
\end{thm}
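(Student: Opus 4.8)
The plan is to reduce the series to a single definite integral, perform the change of variables $s=G_m(-u)$ dictated by the generating-function relation \eqref{relation-gener-fuss}, and then read off the resulting iterated integral as a combination of multiple polylogarithms evaluated at $1-G_m(-x)$. The whole argument runs parallel to the proof of \cite[Theorem 3.8]{WLX2022} (our Theorem \ref{thm-Bfc}); only the substitution changes.

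First I would record the closed form of the ``base'' generating function. By Lagrange inversion applied to $w=G_m(t)-1=t(1+w)^m$ one gets $[t^n]\log G_m(t)=\frac1{mn}\binom{mn}{n}$, hence
\[\sum_{n=1}^\infty \binom{mn}{n}\frac{t^n}{n}=m\log G_m(t).\]
Raising the power of $1/n$ by iterated integration (precisely the device recorded in the Remark following Proposition \ref{thm-CAS}) and applying it to the coefficients $(-1)^n\binom{mn}{n}$ gives, for a positive integer $p$ and $x$ in the admissible range,
\[\sum_{n=1}^\infty \binom{mn}{n}\frac{(-x)^n}{n^{p+1}}=\frac{m}{(p-1)!}\int_0^x \frac{\log^{p-1}(x/u)}{u}\log G_m(-u)\,du.\]

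Next I would substitute $s=G_m(-u)$, so that $s=1-us^m$ gives $u=(1-s)/s^m$, and then set $v=1-s$. A direct computation yields $\frac{du}{u}=\big(\frac1v+\frac{m}{1-v}\big)dv=\omega_0+m\,\omega_1$ (with $\omega_0=dv/v$ and $\omega_1=dv/(1-v)$) and, using $x=(1-\sigma)/\sigma^m$ with $\sigma:=G_m(-x)$, the pleasant identity $\log(x/u)=\log x+m\log(1-v)-\log v$. Since $u\mapsto0$ corresponds to $v\mapsto0$ and $u\mapsto x$ to $v\mapsto1-\sigma$, the series becomes
\[\sum_{n=1}^\infty \binom{mn}{n}\frac{(-x)^n}{n^{p+1}}=\frac{m}{(p-1)!}\int_0^{1-\sigma}\big(\log x+m\log(1-v)-\log v\big)^{p-1}\log(1-v)\,(\omega_0+m\,\omega_1).\]
Expanding the $(p-1)$-th power multinomially separates the factor $\log^{p-1-k}|x|$ (the exponent $p-1-k$ of $\log x$) from the remaining powers of $\log(1-v)$ and $\log v$; the absolute values in the statement arise here because $x$ and $1-\sigma$ may be negative on $|x|\le (m-1)^{m-1}/m^m$. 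As a sanity check, the case $p=1$ reduces to $m\int_0^{1-\sigma}\log(1-v)(\omega_0+m\,\omega_1)=-m\,\Li_2(1-\sigma)-m^2\,\Li_{1,1}(1-\sigma)$, which matches \eqref{AFSN3}.

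The final and hardest step is to evaluate the resulting integrals
\[\int_0^{1-\sigma}(\log(1-v))^{\beta+1}(\log v)^\gamma\,(\omega_0+m\,\omega_1)\]
as multiple polylogarithms at $1-\sigma=1-G_m(-x)$. Writing $(\log(1-v))^{\beta+1}=(-1)^{\beta+1}(\beta+1)!\,\Li_{\{1\}_{\beta+1}}(v)$ via \eqref{equ-p-1-Li} and treating $\frac1{\gamma!}(\log v)^\gamma$ as the shuffle-regularised iterated integral $\omega_0^\gamma$, each term becomes a shuffle product of iterated integrals integrated once more against $\omega_0+m\,\omega_1$; the two summands of this one-form produce the two words $\omega_0^{l+1}\omega_1^{j+1}$ and $\omega_0^{l}\omega_1^{j+2}$, that is, the combination $\Li_{l+2,\{1\}_j}(1-\sigma)+m\,\Li_{l+1,\{1\}_{j+1}}(1-\sigma)$, with the $m\,\omega_1$ branch supplying the extra factor $m$. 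I expect the real work to lie in the bookkeeping: matching the summation indices $j,l,k$, producing the coefficient $(j+1)m^{j+1}/\big((p-1-k)!(k-j-l)!\big)$ together with the sign $(-1)^{l+k}$, and correctly handling the logarithmic divergence of $\omega_0$ at $v=0$ through shuffle regularisation. The only structural difference from \cite[Theorem 3.8]{WLX2022} is that here $s=G_m(-u)$ replaces $G_m(x)^{-1}$, which turns the one-form $\omega_0-(m-1)\omega_1$ of the original argument into $\omega_0+m\,\omega_1$; this accounts for the $m^{j+1}$ and the $+m\,\Li$ appearing in \eqref{AFSN3} in place of the $(m-1)^j$ and $-\Li$ of \eqref{FSN3}.
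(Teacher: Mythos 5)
Your proposal follows the paper's proof essentially step for step: the same integral representation obtained from $\sum_{n\ge1}\binom{mn}{n}(-x)^n/n=m\log G_m(-x)$, the same substitution $w=G_m(-t)$ followed by $t=1-w$ (your $s=G_m(-u)$, $v=1-s$), the same one-form $\frac{dv}{v}+\frac{m\,dv}{1-v}$, and the same identification of the resulting words with $\Li_{l+2,\{1\}_j}+m\Li_{l+1,\{1\}_{j+1}}$ evaluated at $1-G_m(-x)$. The only difference is in the final bookkeeping, and it is cosmetic: where you invoke shuffle-regularised iterated integrals, the paper expands $\log^{j+1}(1-t)$ into a power series with coefficients $\zeta_{n-1}(\{1\}_{j})$ and integrates termwise against the elementary formula for $\int_0^x t^{n-1}\log^m(|t|)\,dt$, which keeps the powers of $\log(|1-G_m(-x)|)$ explicit and avoids regularisation altogether.
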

\begin{proof}
From \cite[Eq. (3.11)]{WLX2022}, we have
\begin{align*}
\sum_{n=1}^\infty \binom{mn}{n}\frac{(-x)^n}{n}=m\log(G_m(-x)).
\end{align*}
Applying the elementary integral formula
\[\int_0^x t^{n-1}\log^{p-1}\left(\left\vert\frac{x}{t}\right\vert\right)dt=(p-1)!\frac{x^n}{n^{p}}\quad (p,n\in\N)\]
to the expression above yields
\begin{align}\label{proof-Apery-one}
\sum_{n=1}^\infty \binom{mn}{n}\frac{(-x)^n}{n^{p+1}}&=\frac{m}{(p-1)!}\int_0^x \frac{\log^{p-1}\left(\left\vert\frac{x}{t}\right\vert\right)}{t}\log(G_m(-t))dt\nonumber\\
&=\frac{m}{(p-1)!} \sum_{k=0}^{p-1} (-1)^k \binom{p-1}{k} \log^{p-1-k}(|x|) \int_0^x \frac{\log^k(|t|)\log(G_m(-t))}{t}dt.
\end{align}
Performing the variable substitution $u=G_m(-x)\in (1/2,2]$ and $w=G_m(-t)$ in the above integral and using \eqref{relation-gener-fuss}, we obtain
\begin{align}\label{proof-Apery-two}
&\int_0^x \frac{\log^k(|t|)\log(G_m(-t))}{t}dt\nonumber\\
&=\int_1^u \log^k\left(\left\vert\frac{1-w}{w^m}\right\vert  \right)\log(w)\left(\frac1{w-1}-\frac{m}{w}\right)dw\quad (t=1-w)\nonumber\\
&=\int_0^{1-u} \left(\frac{\log^k\left(\left\vert \frac{t}{(1-t)^m} \right\vert\right)\log(1-t)}{t}+m \frac{\log^k\left(\left\vert \frac{t}{(1-t)^m} \right\vert\right)\log(1-t)}{1-t}\right)dt\nonumber\\
&=\sum_{j=0}^k (-m)^j\binom{k}{j}\int_0^{1-u} \left(\frac{\log^{k-j}(|t|)\log^{j+1}(1-t)}{t}+\frac{\log^{k-j}(|t|)\log^{j+1}(1-t)}{1-t} \right)dt\nonumber\\
&=\sum_{j=0}^k (-m)^j\binom{k}{j} \left((-1)^{j+1}(j+1)!\sum_{n=1}^\infty \frac{\zeta_{n-1}(\{1\}_{j})}{n} \int_0^{1-u} t^{n-1} \log^{k-j}(|t|)dt\atop +m(-1)^{j+1}(j+1)!\sum_{n=1}^\infty \zeta_{n-1}(\{1\}_{j+1})\int_0^{1-u} t^{n-1} \log^{k-j}(|t|)dt\right)\nonumber\\
&=-k!\sum_{0\leq j+l\leq k,\atop j,l\geq 0} \frac{(-1)^l(j+1)m^j}{(k-j-l)!}\log^{k-j-l}(|1-u|)\nonumber\\
&\qquad\qquad\qquad\times\left(\sum_{n=1}^\infty \frac{\zeta_{n-1}(\{1\}_j)}{n^{l+2}}(1-u)^n+m \sum_{n=1}^\infty \frac{\zeta_{n-1}(\{1\}_{j+1})}{n^{l+1}}(1-u)^n \right)\nonumber\\
&=-k!\sum_{0\leq j+l\leq k,\atop j,l\geq 0} \frac{(-1)^l(j+1)m^j}{(k-j-l)!}\log^{k-j-l}(|1-u|)\left(\Li_{l+2,\{1\}_j}(1-u)+m\Li_{l+1,\{1\}_{j+1}}(1-u)\right),
\end{align}
where we used the definition of multiple polylogarithm function and the following identities (see \cite{Xu17.MZVES})
\begin{align*}
&\log ^k(1 - x) = (- 1)^kk!\sum\limits_{n = 1}^\infty  \frac{{{x^n}}}{n}{\zeta _{n - 1}}(\{1\}_{k - 1})\quad (x\in[-1,1)),\\
&\int\limits_0^x t^{n-1}\log^m(|t|) dt = \sum\limits_{l = 0}^m {l!\binom{m}{l}\frac{{\left( { - 1} \right)}^l}{n^{l + 1}}x^{n}\log^{m - l} (|x|)} .
\end{align*}
Therefore, substituting \eqref{proof-Apery-two} into \eqref{proof-Apery-one} and performing elementary calculations yields the desired formula.
\end{proof}

Setting $p=1$ and $m=2$ in Theorem \ref{thm-AFSN3}, respectively, we obtain the following two corollaries.
\begin{cor}\label{cor-cthm-ases-p-1} For positive integer $m>1$, we have
\begin{align}\label{equ-cor-sec4-one}
\sum_{n=1}^\infty \frac{\binom{mn}{n}}{n^2}(-1)^nx^n=-m\Li_2(1-G_m(-x))-\frac{m^2}{2}\log^2(G_m(-x)).
\end{align}
\end{cor}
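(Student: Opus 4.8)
The plan is to obtain \eqref{equ-cor-sec4-one} as the degenerate case $p=1$ of Theorem \ref{thm-AFSN3}, since no new integration is required once that general identity is available. First I would set $p=1$ in \eqref{AFSN3} and track how the two nested summations collapse. The outer index runs over $k=0,\dots,p-1$, so only $k=0$ survives; the inner constraint $0\le j+l\le k$ then forces $j=l=0$. Hence the entire double sum reduces to the single term indexed by $(k,j,l)=(0,0,0)$.

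Next I would evaluate the prefactor of that surviving term. With $(k,j,l)=(0,0,0)$ we have $(-1)^{l+k}=1$, the factorial factor $\tfrac{(j+1)m^{j+1}}{(p-1-k)!(k-j-l)!}=\tfrac{m}{0!\,0!}=m$, and both logarithmic powers $\log^{p-1-k}(|x|)=\log^{0}(|x|)=1$ and $\log^{k-j-l}(|1-G_m(-x)|)=1$ disappear. The curly-brace factor becomes $\Li_{2}(1-G_m(-x))+m\,\Li_{1,1}(1-G_m(-x))$, so that \eqref{AFSN3} specializes to
\[
\sum_{n=1}^\infty \binom{mn}{n}\frac{(-x)^n}{n^2}
=-m\Big(\Li_{2}(1-G_m(-x))+m\,\Li_{1,1}(1-G_m(-x))\Big).
\]

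Finally I would rewrite the depth-two term using the standard identity \eqref{equ-p-1-Li}, $\Li_{\{1\}_p}(y)=\tfrac{(-1)^p}{p!}\log^p(1-y)$. Taking $p=2$ and $y=1-G_m(-x)$ gives $\Li_{1,1}(1-G_m(-x))=\tfrac12\log^2\!\big(1-(1-G_m(-x))\big)=\tfrac12\log^2(G_m(-x))$, where I use $G_m(-x)\in(1/2,2]$ (established just before Theorem \ref{thm-Bfc}) to ensure the logarithm and the polylogarithm are evaluated in their principal range. Substituting this and noting $(-x)^n/n^2=(-1)^nx^n/n^2$ on the left yields exactly
\[
\sum_{n=1}^\infty \frac{\binom{mn}{n}}{n^2}(-1)^nx^n
=-m\,\Li_2(1-G_m(-x))-\frac{m^2}{2}\log^2(G_m(-x)),
\]
which is \eqref{equ-cor-sec4-one}.

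There is essentially no serious obstacle here: the corollary is a pure specialization, and the only nontrivial step is recognizing the surviving $\Li_{1,1}$ as a squared logarithm via \eqref{equ-p-1-Li}. As an independent check one could instead integrate the base identity $\sum_{n\ge1}\binom{mn}{n}\tfrac{(-x)^n}{n}=m\log(G_m(-x))$ against $\tfrac{dt}{t}$ and then run the substitution $u=G_m(-x)$ exactly as in the $k=0$ case of \eqref{proof-Apery-two}; this reproduces the same right-hand side and confirms the sign bookkeeping.
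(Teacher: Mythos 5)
Your proposal is correct and follows exactly the route the paper intends: the corollary is stated as the $p=1$ specialization of Theorem \ref{thm-AFSN3}, and your collapse of the double sum to the single term $(k,j,l)=(0,0,0)$ together with the reduction $\Li_{1,1}(1-G_m(-x))=\tfrac12\log^2(G_m(-x))$ via \eqref{equ-p-1-Li} is precisely the computation the paper leaves implicit. Nothing further is needed.
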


\begin{cor} For positive integer $p$ and $|x|\leq 1$, we have
\begin{align}\label{equ-cor-anotherexa}
\sum_{n=1}^\infty \frac{\binom{2n}{n}}{n^{p+1}}\frac{(-x)^n}{4^n}&=-\sum_{k=0}^{p-1} \sum_{0\leq j+l\leq k\atop j,l\geq 0} \frac{(-1)^{l+k}(j+1)2^{j+1}}{(p-1-k)!(k-j-l)!}\log^{p-1-k}\left(\frac{|x|}{4}\right)\log^{k-j-l}\left(\left\vert\frac{\sqrt{1+x}-1}{\sqrt{1+x}+1}\right\vert\right)\nonumber\\
&\quad\times\left(\Li_{l+2,\{1\}_j}\left(\frac{\sqrt{1+x}-1}{\sqrt{1+x}+1}\right)+2\Li_{l+1,\{1\}_{j+1}}\left(\frac{\sqrt{1+x}-1}{\sqrt{1+x}+1}\right)\right).
\end{align}
\end{cor}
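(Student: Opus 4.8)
The plan is to derive \eqref{equ-cor-anotherexa} as the $m=2$ specialization of Theorem \ref{thm-AFSN3}, combined with the rescaling $x\mapsto x/4$ needed to produce the factor $4^{-n}$ in the target sum.

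First I would put $m=2$ in \eqref{AFSN3}. Then $\binom{mn}{n}$ becomes the central binomial coefficient $\binom{2n}{n}$, the factor $m^{j+1}$ becomes $2^{j+1}$, and the bracket $\Li_{l+2,\{1\}_j}+m\Li_{l+1,\{1\}_{j+1}}$ becomes $\Li_{l+2,\{1\}_j}(1-G_2(-x))+2\Li_{l+1,\{1\}_{j+1}}(1-G_2(-x))$, which already matches the polylogarithmic factor on the right of \eqref{equ-cor-anotherexa}. To generate the missing $4^{-n}$, I would next replace $x$ by $x/4$ throughout: the left side turns into $\sum_{n\ge1}\binom{2n}{n}(-x/4)^n/n^{p+1}=\sum_{n\ge1}\binom{2n}{n}(-x)^n/(4^n n^{p+1})$, and the prefactor $\log^{p-1-k}(|x|)$ becomes $\log^{p-1-k}(|x|/4)$, exactly as stated.

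The only genuine computation is evaluating $G_2(-x/4)$ in closed form and simplifying $1-G_2(-x/4)$. At $m=2$ the relation \eqref{relation-gener-fuss} reads $G_2(y)=1+yG_2(y)^2$, whose branch with $G_2(0)=1$ is the Catalan generating function $G_2(y)=(1-\sqrt{1-4y})/(2y)$. Setting $y=-x/4$ gives $G_2(-x/4)=2(\sqrt{1+x}-1)/x$, and rationalizing the numerator yields $G_2(-x/4)=2/(1+\sqrt{1+x})$; hence $1-G_2(-x/4)=(\sqrt{1+x}-1)/(\sqrt{1+x}+1)$, precisely the argument appearing in \eqref{equ-cor-anotherexa}. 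Substituting these into the specialized identity finishes the proof. I would also record the admissible range: Theorem \ref{thm-AFSN3} requires $|x|\le(m-1)^{m-1}/m^m=1/4$ at $m=2$, so after $x\mapsto x/4$ the condition becomes $|x|\le1$, matching the hypothesis of the corollary. There is no substantial obstacle here beyond the elementary rationalization of $1-G_2(-x/4)$, since all the analytic work is already carried out in the proof of Theorem \ref{thm-AFSN3}.
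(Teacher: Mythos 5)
Your proposal is correct and follows exactly the route the paper intends: the paper obtains this corollary simply by "setting $m=2$ in Theorem \ref{thm-AFSN3}," and your rescaling $x\mapsto x/4$, the identification $G_2(-x/4)=2/(1+\sqrt{1+x})$ via the Catalan generating function, and the resulting $1-G_2(-x/4)=(\sqrt{1+x}-1)/(\sqrt{1+x}+1)$ together with the range check $|x|\le 1$ are precisely the omitted details, all carried out correctly.
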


\begin{re}
Corollary \ref{cor-cthm-ases-p-1} also holds for $m=1$ and $x \in (-1/2, 1)$, because setting $m=1$ yields the following known result (see \cite[Page 106, Eq. (2.6.15)]{A2000}):
\begin{align}\label{cor-one-case-1}
2\Li_2(-x)+2\Li_2\left(\frac{x}{1+x}\right)+\log^2(1+x)=0.
\end{align}
In fact, Theorem \ref{thm-AFSN3} also holds for $m=1$ and $x \in (-1/2, 1)$, which can be directly seen from the proof. This is because when $m=1$, only $x\in (-1/2, 1)$ ensures that the range of $u = G_1(-x) = (1+x)^{-1}\in (0, 2)$ (more precisely, $(1/2, 2)$), thereby allowing the use of the power series expansion of $\log^j(1-t)$ in the calculation of \eqref{proof-Apery-two}.
\end{re}

Letting $p=1$ and $2$ in \eqref{equ-cor-anotherexa} and applying \eqref{equ-p-1-Li} give
\begin{align}
&\sum_{n=1}^\infty \frac{\binom{2n}{n}}{n^{2}}(-1)^n\frac{x^n}{4^n}=-2\Li_{2}\left(\frac{\sqrt{1+x}-1}{\sqrt{1+x}+1}\right)-2\log^2\left(\frac{2}{1+\sqrt{1+x}}\right),\label{case2-equ-apery-1}\\
&\sum_{n=1}^\infty \frac{\binom{2n}{n}}{n^{3}}(-1)^n\frac{x^n}{4^n}=2\log\left(\frac{4}{|x|}\left\vert\frac{\sqrt{1+x}-1}{\sqrt{1+x}+1}\right\vert\right)\left(\Li_{2}\left(\frac{\sqrt{1+x}-1}{\sqrt{1+x}+1}\right)
+\log^2\left(\frac{2}{1+\sqrt{1+x}}\right)\right)\nonumber\\
&\qquad\qquad\qquad-2\Li_{3}\left(\frac{\sqrt{1+x}-1}{\sqrt{1+x}+1}\right)+4\Li_{2,1}\left(\frac{\sqrt{1+x}-1}{\sqrt{1+x}+1}\right)
-\frac{8}{3}\log^3\left(\frac{2}{1+\sqrt{1+x}}\right).\label{case2-equ-apery-2}
\end{align}

Obviously, the formulas in Theorems \ref{thm-Bfc} and \ref{thm-AFSN3} must be essentially equal. Therefore, some identities involving multiple polylogarithm functions can be established from the two formulas in the theorems. For instance, from \eqref{case-equ-apery-1} and \eqref{case2-equ-apery-1}, and from \eqref{case-equ-apery-2} and \eqref{case2-equ-apery-2} respectively, we obtain
\begin{align}\label{case-speci-x}
2\Li_2\left(\frac{1-\sqrt{1-x}}{2}\right)+2\Li_{2}\left(\frac{\sqrt{1-x}-1}{\sqrt{1-x}+1}\right)+\log^2\left(\frac{1+\sqrt{1-x}}{2}\right)=0
\end{align}
and
\begin{align}
&2\Li_{2,1}\left(\frac{1-\sqrt{1-x}}{2}\right)-4\Li_{2,1}\left(\frac{\sqrt{1-x}-1}{\sqrt{1-x}+1}\right)\nonumber\\
&=2\log\left(\frac{4}{|x|}\left\vert\frac{\sqrt{1-x}-1}{\sqrt{1-x}+1}\right\vert\right)\left(\Li_{2}\left(\frac{\sqrt{1-x}-1}{\sqrt{1-x}+1}\right)
+\log^2\left(\frac{1+\sqrt{1-x}}{2}\right)\right)\nonumber\\
&\quad-2\Li_3\left(\frac{1-\sqrt{1-x}}{2}\right)-2\Li_{3}\left(\frac{\sqrt{1-x}-1}{\sqrt{1-x}+1}\right)+3\log^3\left(\frac{1+\sqrt{1-x}}{2}\right).
\end{align}
It is evident that \eqref{cor-one-case-1} and \eqref{case-speci-x} represent fundamentally the same equation.
By assigning specific values to $x$ in the two preceding equations, several classical results can be derived. For example, setting $x=1$ in \eqref{case-speci-x} yields
\[\Li_2(1/2)=\frac{\pi^2}{12}-\frac1{2}\log^2(2).\]

{\bf Declaration of competing interest.}
The author declares that he has no known competing financial interests or personal relationships that could have
appeared to influence the work reported in this paper.

{\bf Data availability.}
No data was used for the research described in the article.

{\bf Acknowledgments.}  The author is supported by the General Program of Natural Science Foundation of Anhui Province (Grant No. 2508085MA014).

\medskip


\begin{thebibliography}{99}

\bibitem{A2000}
G.E.\ Andrews, R.\ Askey and R.\ Roy, \emph{Special Functions},
Cambridge University Press, 2000, pp.\ 104-106.

\bibitem{Apery1978}
R.\ Ap\'ery, Irrationalit\'e de $\zeta(2)$ et $\zeta(3)$ (in French),
\emph{Ast\'erisque} \textbf{61}(1979), pp.\ 11--13.

\bibitem{AM1999}
T. Arakawa and M. Kaneko, Multiple zeta values, poly-Bernoulli numbers, and related zeta functions, \emph{Nagoya Math. J.} \textbf{153}(1999), pp.\ 189-209.

\bibitem{Au2024}
K. Au, Multiple zeta values, WZ-pairs and infinite sums computations, \emph{Ramanujan J.} \textbf{66}(3)(2024).

\bibitem{Au2025}
K. Au, Wilf-Zeilberger seeds and non-trivial hypergeometric identities, \emph{J. Symbolic Comput.} 130: 102421.

\bibitem{Av2008}
J.C. Aval, Multivariate Fuss-Catalan numbers, \emph{Discrete Math.} \textbf{308}(2008), pp.\ 4660--4669.

\bibitem{Campbell2019}
J.M.\ Campbell, Series containing squared central binomial coefficients and alternating harmonic
numbers,\emph{ Mediterr. J. Math.} \textbf{16}(2019) Art. 37, 7.

\bibitem{CampbellCA2022}
J.M.\ Campbell, M.\ Cantarini and J.\ D'Aurizio,
Symbolic computations via Fourier--Legendre expansions and fractional operators,
\emph{Integral Transforms Special Func.} \textbf{33}(2)(2022), pp.\ 1--19.

\bibitem{CampbellDS2019}
J.M.\ Campbell, J.\ D'Aurizio and J.\ Sondow,
On the interplay among hypergeometric functions, complete elliptic integrals, and Fourier--Legendre expansions,
\emph{J. Math. Anal. Appl.} \textbf{479}(2019), pp.\ 90--121.

\bibitem{CampbellGZ2024}
J.M.\ Campbell, M.L.\ Glasser and Y. Zhou,
New evaluations of inverse binomial series via cyclotomic multiple zeta values, \emph{SIGMA} \textbf{20}(2024), 079, 14 pages.

\bibitem{CantariniD2019}
M.\ Cantarini and J.\ D'Aurizio,
On the interplay between hypergeometric series, Fourier--Legendre expansions and Euler sums
\emph{Boll.\ Unione Mat.\ Ital.}, \textbf{12}(2019), pp.\ 623--656.

\bibitem{Chen2016}
H.\ Chen, Interesting series associated with central binomial coefficients, Catalan numbers and harmonic
numbers, \emph{J. Integer Seq.} \textbf{19}(2016). Article 16.1.5.


\bibitem{ChenWangZhong2025}
A. Chen, W. Wang and J. Zhong, Evaluations of some double Ap\'ery-type series via Fourier-Legendre expansions, \emph{Bull. Malays. Math. Sci. Soc.} \textbf{48}(4)(2025),  Paper No. 115, 25 pp.

\bibitem{ChenWang2025}
X. Chen and W. Wang, Ap\'ery-type series via colored multiple zeta values and Fourier-Legendre series expansions, \emph{J. Symbolic Comput.} 134:102508.

\bibitem{C1974}
L. Comtet, \emph{Advanced Combinatorics}, D. Reidel Publishing Co., Dordrecht, 1974.

\bibitem{Flajolet-Salvy}
P. Flajolet and B. Salvy, Euler sums and contour integral representations, \emph{Experiment. Math.} \textbf{7}(1)(1998), pp.\  15-35.

\bibitem{SR2005}
S. Fomin and N. Reading, Generalized cluster complexes and coxeter combinatorics, \emph{Int. Math. Res. Notices}, {\bf 44}(2005), pp.\ 2709--2757.

\bibitem{HP2001}
P. Hilton and J. Pedersen, Catalan numbers, their generalizations, and their uses, \emph{Math. Intelligencer}, {\bf 13}(2001), pp.\ 64--75.

\bibitem{GR2025}
M. Gen\v{c}ev and P. Rucki, On a class of multiple Ap\'ery-like series and their reduction, \emph{Mediterr. J. Math.}, (2025)22:195.

\bibitem{KT2018}
M. Kaneko and H. Tsumura, Multi-poly-Bernoulli numbers and related zeta functions, \emph{Nagoya Math. J.} {\bf 232}(2018), pp.\ 19-54.

\bibitem{KMT2023} Y. Komori, K. Matsumoto and H. Tsumura, \textit{The theory of zeta-functions of root systems}, Springer Monographs in Mathematics, Springer Singapore, 2023, pp. 3--5.

\bibitem{LaiLuorr2022}
L. Lai, C. Lupu and D. Orr, Elementary proofs of Zagier's formula for multiple zeta values and its odd variant, \emph{Proc. Amer. Math. Soc.} \textbf{154}(1)(2026), pp.\ 11-24.

\bibitem{Luke69.1}Y.L. Luke,
The Special Functions and Their Approximations, Vol. I.
Mathematics in Science and Engineering, Vol. 53,
Academic Press, New York-London, 1969.

\bibitem{Lupu2022}
C. Lupu, Another look at Zagier's formula for multiple zeta values involving Hoffman elements, \emph{Math. Zeit.} \textbf{301}(2022), pp.\ 3127-3140.

\bibitem{Riordan58}
J. Riordan, \emph{An Introduction to Combinatorial Analysis}, Reprint of the 1958 original, Dover Publications, Inc., Mineola, NY, 2002.

\bibitem{Rui-Xu2025}
H. Rui and C. Xu, Contour integrations and parity results of cyclotomic Euler sums and multiple polylogarithm function, \emph{J.\ Number Theory}, \textbf{283}(2026), pp.\ 64--96.

\bibitem{Sun2015}
Z.-W.\ Sun, New series for some special values of $L$-functions, \emph{Nanjing Univ. J. Math. Biquarterly} \textbf{32}(2015), no.2, pp.\ 189-218.

\bibitem{Sun2021}
Z.-W.\ Sun, \emph{New Conjectures in Number Theory and Combinatorics}, Harbin Institute of Technology Press, (in Chinese) 2021.

\bibitem{WX2021}
W. Wang and C. Xu, Alternating multiple zeta values, and explicit formulas of some Euler-Ap\'ery type series, \emph{Eur. J. Combin.} {\bf 93}(2021), 103283.

\bibitem{WLX2022}
Y. Wang, Y. Li and C. Xu, Evaluations of some Euler-Ap\'ery-type series, \emph{Indian J. Pure Appl. Math.} \textbf{53}(4)(2022), pp.\ 849--864.

\bibitem{Xu17.MZVES}
C. Xu, Multiple zeta values and Euler sums, \emph{J. Number Theory} \textbf{177}(2017), pp.\ 443--478.

\bibitem{Xu18}
C. Xu, Identities about level 2 Eisenstein series, \emph{Commun. Korean Math. Soc.}  \textbf{35}(1)(2020), pp. \ 63--81.

\bibitem{Xu2026}
C. Xu, Symmetry results for cyclotomic multiple Hurwitz zeta values via contour integrals, arXiv:2602.10391.

\bibitem{YuanZh2014a}
H.\ Yuan and J.\ Zhao, Double shuffle relations of double zeta values
and double Eisenstein series of level $N$,
\emph{J.\ London Math.\ Soc.}  \textbf{92}(2)(2015), pp. \ 520--546.

\bibitem{Zhao2007d}
J. Zhao, Analytic continuation of multiple polylogarithms, \emph{Anal.\ Math.} \textbf{33}(2007), pp.\ 301--323.

\bibitem{Zhao2010}
J. Zhao, Linear relations of special values of multiple polylogarithms at roots of unity, \emph{Doc. Math.} \textbf{15}(2010), pp.\ 1--34.

\bibitem{Z2016}
J. Zhao, \emph{Multiple Zeta Functions, Multiple Polylogarithms and Their Special Values}, Series on Number
Theory and its Applications, Vol.~12, World Scientific Publishing Co. Pte. Ltd., Hackensack, NJ, 2016.

\bibitem{Z2024}
J. Zhao, Weighted and restricted sum formulas of Euler sums, \emph{Res. Number Theory} \textbf{10}(4)(2024), Paper No. 87, 40 pp.

\end{thebibliography}
\end{document}